\documentclass[12pt]{article}

\usepackage{amssymb}
\usepackage{amsfonts}
\usepackage{amsthm}
\usepackage{amsmath}
\usepackage{float}
\usepackage{bigints}
\usepackage{fullpage}
\usepackage{amsopn}
\usepackage[dvips]{graphicx}   
\usepackage{color,epsfig}      


\newtheorem{Lemma}{Lemma}

\newtheorem{Theorem}{Theorem}
\newtheorem{Conjecture}{Conjecture}
\newtheorem{Definition}{Definition}

\newtheorem{Remark}{Remark}

\numberwithin{Subcase}{Case}

\DeclareMathOperator{\vol}{vol}

\newcommand{\Eu}{\mathbb{E}}

\newcommand{\BB}{\mathbf{B}}

\newcommand{\M}{\mathbb{M}}
\newcommand{\MM}{\mathbf{M}}

\renewcommand{\P}{\mathcal{P}}
\newcommand{\PP}{\mathbf{P}}
\newcommand{\z}{\mathbf{z}}
\newcommand{\y}{\mathbf{y}}
\newcommand{\x}{\mathbf{x}}
\newcommand{\q}{\mathbf{q}}
\newcommand{\p}{\mathbf{p}}

\renewcommand{\o}{\mathbf{o}}

\DeclareMathOperator{\conv}{conv}

\DeclareMathOperator{\area}{area}
\DeclareMathOperator{\bd}{bd}

\DeclareMathOperator{\inter}{int}

\DeclareMathOperator{\cl}{cl}

\newcommand{\C}{\mathcal{C}}
\renewcommand{\c}{\mathbf{c}}
\renewcommand{\a}{\mathbf{a}}
\renewcommand{\b}{\mathbf{b}}

\title{Remarks on soft ball packings in dimensions 2 and 3
\footnote{Keywords and phrases: Euclidean $d$-space, Minkowski $d$-space, soft packing, soft parameter, soft density, soft lattice packing, FCC lattice, Voronoi decomposition, Delaunay decomposition, refined Moln\'ar decomposition. \newline \hspace*{.35cm} 2010 Mathematics Subject Classification: 52C15, 52C17, 52C05, 52C07, 52A40.}}
\author{K\'{a}roly Bezdek\thanks{Partially supported by a Natural Sciences and 
Engineering Research Council of Canada Discovery Grant.} and Zsolt L\'angi\thanks{Partially supported by the National Research, Development and Innovation Office, NKFI, K-147544 and BME Water Sciences \& Disaster Prevention TKP2020 Institution Excellence Subprogram, grant no. TKP2020 BME-IKA-VIZ.}
}

\date{}

\begin{document}

\maketitle

\begin{abstract}
We study translative arrangements of centrally symmetric convex domains in the plane (resp., of congruent balls in the Euclidean $3$-space) that neither pack nor cover. We define their soft density depending on a soft parameter and prove that the largest soft density for soft translative packings of a centrally symmetric convex domain with $3$-fold rotational symmetry and given soft parameter is obtained for a proper soft lattice packing. Furthermore, we show that among the soft lattice packings of congruent soft balls with given soft parameter the soft density is locally maximal for the corresponding face centered cubic (FCC) lattice.

\end{abstract}

\section{Introduction}

Motivated by biological findings, lattice arrangements of congruent balls in Euclidean $3$-space with pairwise overlaps limited by a properly introduced parameter (i.e., lattice soft sphere packings), have been studied from the point of view of largest soft densities. For details see \cite{Ed-IgH} and the references mentioned there. A similar, but different concept of soft density has been investigated for not necessarily lattice-type soft packings in \cite{Be-La-2015}. In this note we continue the line of research started in \cite{Be-La-2015} and study the highest soft densities of soft ball packings in Euclidean $d$-space for $d=2,3$. In fact, our results are based on the following somewhat more general concept of soft packings and their soft densities.

\begin{Definition}\label{defn:main}
Let the Euclidean norm of the $d$-dimensional Euclidean space $\Eu^d$ be denoted by $\|\cdot\|$ and let $\BB^d:=\{\x\in \Eu^d | \|\x\|\leq 1\}$ denote the (closed) $d$-dimensional unit ball centered at the origin $\o$ in  $\Eu^d$, where $d>1$. Next,
let $\MM$ be an arbitrary $\o$-symmetric convex body (i.e., let $\MM$ be a compact, convex set with non-empty interior, and $\MM=-\MM$) in $\Eu^d$, $d>1$ and let $\M$ be the $d$-dimensional Minkowski space (i.e., $d$-dimensional normed space) with $\MM$ as its unit ball, and norm $\| \x \|_{\MM}:=\min\{\mu>0 | \x\in\mu\MM\}$ for any $\x\in\Eu^d$.
Then let $\P:=\{\c_i+\MM\ |\ i=1,2,\dots \ {\rm with}\ \|\c_j-\c_k\|_{\M} \ge 2\ {\rm for}\ {\rm all}\ 1\le j< k\}$ be an arbitrary infinite packing of unit balls in $\M$. Moreover, for any $d\ge 2$  and $\lambda\ge 0$ let $\mathbf{P}_{\lambda}:=\bigcup_{i=1}^{+\infty}(\c_i+(1+\lambda)\MM)$ denote the outer parallel domain of $\mathbf{P}:=\bigcup_{i=1}^{+\infty}(\c_i+\MM)$ having outer radius $\lambda$ in $\M$. Furthermore, let
$$\bar{\delta}_d(\mathbf{P}_{\lambda}):=\limsup_{R\to +\infty}\frac{\vol_d(\mathbf{P}_{\lambda}\cap R\BB^d)}{\vol_d(R\BB^d) }$$
be the (upper) density of the outer parallel domain $\mathbf{P}_{\lambda}$ assigned to the unit ball packing $\P$ in $\M$, where $\vol_d(\cdot)$ denotes the $d$-dimensional volume (i.e., Lebesgue measure) of the corresponding set in $\Eu^d$. Finally, let
$$\bar{\delta}_{\M}(\lambda):=\sup_{\P}\bar{\delta}_d(\mathbf{P}_{\lambda})$$
be the largest density of the outer parallel domains of unit ball packings having outer radius $\lambda$ in $\M$. Putting it somewhat differently, one could say that the family
$\{\c_i+(1+\lambda)\MM\ |\ i=1,2,\dots\}$ of closed balls of radii $1+\lambda$ is a packing of soft balls with soft parameter $\lambda$  in $\M$, if $\P:=\{\c_i+\MM\ |\ i=1,2,\dots\}$
is a unit ball packing  of $\M$ in the usual sense. In particular, $\bar{\delta}_d(\mathbf{P}_{\lambda})$ is called the (upper) soft density of the soft ball packing $\{\c_i+(1+\lambda)\MM\ |\ i=1,2,\dots\}$  in $\M$ with
$\bar{\delta}_{\MM}(\lambda)$ standing for the largest soft density of packings of soft balls of radii $1+\lambda$ having soft parameter $\lambda$ in $\M$.
\end{Definition}

\begin{Remark}
We define $\bar{\delta}_{\M}^{\mathrm{lattice}}(\lambda)$ as the supremum $\sup_{\P}\bar{\delta}_d(\mathbf{P}_{\lambda})$ of all outer parallel domains $\mathbf{P}_{\lambda}$ of all lattice packings $\P$ of unit balls in $\M$, having outer radius $\lambda$. 
\end{Remark}

\begin{Remark}
For any normed space $\M$, the functions $\bar{\delta}_{\M}(\lambda)$ and $\bar{\delta}_{\M}^{\mathrm{lattice}}(\lambda)$ are increasing functions of $\lambda$, the value of $\bar{\delta}_{\M}(0)$ is the packing density of $\MM$, and if $\gamma(\MM)$ denotes the simultaneous packing and covering constant of $\MM$ (\cite{Zo2008}), then $\gamma(\MM)-1$ is the
smallest value of $\lambda$ with $\bar{\delta}_{\M}(\lambda) = 1$.
\end{Remark}

By \cite{FTL} (see also \cite{Rogers}, \cite{Rogers2}) and \cite{Zo2008}, for any $\o$-symmetric planar convex body $\MM$, we have that both $\bar{\delta}_{\M}(0)$ and $\gamma(\MM)$ are attained for lattice packings of $\MM$. The following theorem extends these results to $\o$-symmetric planar convex bodies with $3$-fold rotational symmetry as follows.

\begin{Theorem}\label{thm:lattice}
For any $\o$-symmetric planar convex body $\MM$ with $3$-fold rotational symmetry and $\lambda\geq 0$, there is a lattice packing $\P$ of translates of $\MM$ such that
\[
\bar{\delta}_2(\mathbf{P}_{\lambda}) = \bar{\delta}_{\M}(\lambda).
\]
In other words, we have $\bar{\delta}_{\M}(\lambda) = \bar{\delta}_{\M}^{\mathrm{lattice}}(\lambda)$ for any $\o$-symmetric planar convex body $\MM$ with $3$-fold rotational symmetry and $\lambda\geq 0$.
\end{Theorem}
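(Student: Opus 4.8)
The plan is to reduce the problem to a statement about densities of outer parallel domains and then exploit the rich symmetry group of a $3$-fold symmetric body together with the classical result that $\bar{\delta}_{\M}(0)$ is attained by a lattice. First I would fix $\lambda \ge 0$ and an arbitrary (not necessarily lattice) packing $\P = \{\c_i + \MM\}$ of unit balls in $\M$; the goal is to produce a lattice packing whose $\lambda$-parallel domain has at least the density of $\mathbf{P}_\lambda$. The key observation is that $\mathbf{P}_\lambda = \bigcup_i (\c_i + (1+\lambda)\MM)$ is, up to scaling, the union of translates of the enlarged body $(1+\lambda)\MM$ whose centers form a packing of translates of $2\MM$ (the minimal distance condition $\|\c_j - \c_k\|_\M \ge 2$ says exactly that the bodies $\c_i + \MM$ are non-overlapping). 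Thus the density $\bar{\delta}_2(\mathbf{P}_\lambda)$ is governed by how efficiently one can pack the centers $\c_i$ subject to a fixed minimal distance while maximizing the measure of the union of the $(1+\lambda)$-dilates; this is a covering-type functional on a packing configuration.

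The heart of the argument should be an averaging / symmetrization step. I would consider the group $G$ generated by the $3$-fold rotation $\rho$ that is a symmetry of $\MM$ (and hence of every dilate of $\MM$), and apply the standard ``take the best of the rotated copies'' or Steiner-type symmetrization argument: given the optimal or near-optimal packing, one averages the local density contributions over the rotations in $G$, or more precisely one shows that the extremal configuration can be assumed invariant under a lattice that is itself $3$-fold symmetric. Concretely, for plane convex bodies with $3$-fold symmetry the relevant densest lattice packing of $2\MM$ is the one whose fundamental domain is spanned by two vectors related by the $120^\circ$ rotation, and one wants to show that the parallel-domain density of any packing is dominated by that of this symmetric lattice. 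The natural tool is a weighted version of the Fejes Tóth / Rogers moment argument: partition the plane into the Dirichlet--Voronoi cells of the centers $\c_i$ with respect to the norm $\|\cdot\|_\M$, bound the contribution of $(1+\lambda)\MM$ inside each cell from above by a function of the cell's area, and use convexity of that function together with the fact that the average cell area is minimized (hence each term controlled) precisely for the symmetric lattice.

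A cleaner route, which I would try first, is to invoke the known structure of the extremal lattice for $\gamma(\MM)$ and $\bar{\delta}_\M(0)$: for an $\o$-symmetric plane convex body the densest lattice packing of $\MM$ has a hexagonal-type structure, and when $\MM$ has $3$-fold rotational symmetry this structure is forced to be genuinely $6$-fold symmetric (since the packing lattice inherits the symmetry of $\MM$). In that case the outer parallel domain $\mathbf{P}_\lambda$ of the lattice packing has a fundamental cell that is a (symmetric) hexagon, and the density is an explicit monotone function of $\lambda$. One then needs a matching upper bound: for \emph{any} packing, $\bar{\delta}_2(\mathbf{P}_\lambda) \le$ (this explicit value). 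I would prove this upper bound by the Voronoi-cell / Fejes Tóth dissection described above, where the essential inequality is that among all convex cells of a given area containing $\MM$, the amount of area covered by $(1+\lambda)\MM$ (relative to the cell) is maximized by the most ``round with respect to $\M$'' cell, which by the $3$-fold symmetry is the hexagonal Voronoi cell of the symmetric lattice.

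The main obstacle I anticipate is the upper bound, i.e., controlling non-lattice packings: the parallel-domain density functional is not obviously concave under the natural operations, and Voronoi cells for a general packing can be arbitrarily elongated, which a priori could allow a non-lattice packing to beat the lattice — this is exactly why Problem~\ref{open1} is open in general. The $3$-fold symmetry is what saves us: it should let us replace each Voronoi cell by a $3$-fold symmetrized version without decreasing the covered fraction, after which a local (single-cell) optimization, combined with a global area-balancing argument (the cells tile the plane, so their areas average to the lattice value), closes the gap. Making the symmetrization step rigorous — in particular handling the interaction between neighboring cells and the possibility of ``wasted'' overlap in $\mathbf{P}_\lambda$ where several dilated balls meet — is where the real work lies, and I would expect to need a careful refined decomposition (of Molnár type, as the title of the paper suggests) rather than the naive Voronoi one.
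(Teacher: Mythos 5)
Your proposal correctly identifies the general territory --- a cell-by-cell density bound over a Voronoi/Delaunay-type decomposition, refined in the style of Moln\'ar --- but it stops at the level of a plan and the specific mechanisms you propose to close the argument are the ones that do not work. The ``moment-type'' step you describe (bound the covered area in each Voronoi cell from above by a convex function of the cell's area, then average) is exactly the classical Fejes T\'oth approach, and it fails here for the reason you yourself flag: the fraction of a cell covered by $(1+\lambda)\MM$ is not a function of the cell's area alone but depends on its shape, in particular on how far its vertices lie from the centers, and elongated cells are not excluded by any area constraint. Likewise, ``$3$-fold symmetrizing each Voronoi cell'' is not a defined operation --- the cells have no distinguished center of $3$-fold symmetry, and there is no reason a symmetrization would preserve or increase the covered fraction while keeping the cells tiling the plane. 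Neither of these steps is carried out, and neither can be, so the proposal has a genuine gap at precisely the point you call ``where the real work lies.''

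What the paper actually does is different in its decisive details. It passes from the Delaunay decomposition to the Moln\'ar decomposition (replacing each \emph{separating side} of a cell whose circumcenter lies outside it by the two-segment \emph{bridge} through that circumcenter, and proving this still yields a locally finite edge-to-edge decomposition), and then refines further by joining circumcenters to vertices. Every cell of the resulting decomposition has the explicit form $\cl\left(\conv\{\a,\b,\c\}\setminus\conv\{\a,\b,\c'\}\right)$ with $\a,\b$ centers of the packing, $\|\a-\b\|_{\M}\ge 2$, and $\c,\c'$ equidistant from $\a,\b$. The proof then rests on two monotonicity lemmas with no averaging at all: moving the apex of such an isosceles triangle toward the base does not decrease the density of $(\a+(1+\lambda)\MM)\cup(\b+(1+\lambda)\MM)$ in it (a comparison of densities inside and outside the sublevel set $\a+\mu\MM\cup\b+\mu\MM$), and shrinking the base to $\M$-length $2$ does not decrease it either (via a linear map $A$ sending the old base endpoints to the new ones, together with the boundary-arc observation of Remark~\ref{rem:geomobs} that $A$ pushes the relevant arcs of $\bd\MM$ outward/inward appropriately). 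The $3$-fold symmetry enters not through symmetrization but through the fact that an $\M$-equilateral triangle of side $2$ is Euclidean-regular, so the per-cell optima in all directions are simultaneously realized by a single (regular triangular) lattice. If you want to complete your argument you would need to supply these two lemmas and the decomposition whose cells they apply to; the route through convexity-in-area or group averaging does not lead there.
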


\begin{Remark}
As a special case it was proved in \cite{Be-La-2015} that $\bar{\delta}_{\Eu^2}(\lambda) = \bar{\delta}_{\Eu^2}^{\mathrm{lattice}}(\lambda)$ holds for all $\lambda\geq 0$, which is attained for a regular triangle lattice packing of soft disks of radii $1+\lambda$ with soft parameter $\lambda$ in $\Eu^2$.
\end{Remark}

It is natural to expect that Theorem~\ref{thm:lattice} extends as follows.
\begin{Conjecture}\label{open1}
Prove that $\bar{\delta}_{\M}(\lambda) = \bar{\delta}_{\M}^{\mathrm{lattice}}(\lambda)$ holds for any $\o$-symmetric planar convex body $\MM$ and $\lambda\geq 0$.
\end{Conjecture}

The analogue of Problem~\ref{open1} in dimensions $3$ and higher is wide open, even for Euclidean balls. We have the following partial results on soft ball packings in $\Eu^3$.

On the one hand, B\"or\"oczky \cite{Bo} has proved that $\gamma(\BB^3)=\sqrt{\frac{5}{3}}$, which is attained for a BCC lattice packing of $\BB^3$. This implies that  $\bar{\delta}_{\Eu^3}(\lambda) = \bar{\delta}_{\Eu^3}^{\mathrm{lattice}}(\lambda)=1$ holds for all $\sqrt{\frac{5}{3}}-1=0.2909...\leq \lambda$. On the other hand, Theorems 5 and 6 of \cite{Be-La-2015} give upper bounds strictly less than $1$ for $\bar{\delta}_{\Eu^3}(\lambda)$ and for any $0\leq \lambda\leq\sqrt{\frac{3}{2}}-1=0.2247...$. This leaves us with the trivial upper bound $\bar{\delta}_{\Eu^3}(\lambda)\leq 1$ for any $\sqrt{\frac{3}{2}}-1=0.2247...<\lambda<\sqrt{\frac{5}{3}}-1=0.2909...$. We improve this trivial upper bound as follows.

\begin{Theorem}\label{thm:density}
Let $0 < \lambda < \sqrt{\frac{5}{3}}-1=0.2909...$. Then $\bar{\delta}_{\Eu^3}^{\mathrm{lattice}}(\lambda)\leq\bar{\delta}_{\Eu^3}(\lambda) \leq 1 - \left(\frac{ \sqrt{\frac{5}{3}} - 1 - \lambda}{11 \sqrt{\frac{5}{3}} +3 - \lambda}\right)^3$.
\end{Theorem}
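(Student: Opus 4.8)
\medskip

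The plan is to bound $\bar{\delta}_{\Eu^3}(\lambda)$ from above by exploiting B\"or\"oczky's theorem $\gamma(\BB^3)=\sqrt{5/3}$ together with a scaling trick that relates the soft parameter $\lambda$ to the simultaneous packing-and-covering regime. Fix an arbitrary unit ball packing $\P=\{\c_i+\BB^3\}$ in $\Eu^3$, and consider its outer parallel domain $\mathbf{P}_\lambda=\bigcup_i(\c_i+(1+\lambda)\BB^3)$. Since $\lambda<\gamma(\BB^3)-1=\sqrt{5/3}-1$, the balls $\c_i+(1+\lambda)\BB^3$ need not cover $\Eu^3$; I want to lower-bound the volume of the uncovered set per unit volume of space. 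The first step is a purely local observation: because $\sqrt{5/3}-1$ is the simultaneous packing-covering constant, in \emph{any} unit ball packing there is an uncovered point at distance at least $\sqrt{5/3}$ from every center, so around such a point a ball of radius $\sqrt{5/3}-1-\lambda$ lies entirely in the complement of $\mathbf{P}_\lambda$. The difficulty is that one such hole does not control the global density — I need holes appearing with positive frequency.

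\medskip

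The second step is to make the hole-counting quantitative and uniform. I would argue as follows: suppose for contradiction that $\bar\delta_3(\mathbf{P}_\lambda)$ exceeds the claimed bound $1-\rho^3$ with $\rho=\frac{\sqrt{5/3}-1-\lambda}{11\sqrt{5/3}+3-\lambda}$. Then in a large box $R\BB^3$ the complement of $\mathbf{P}_\lambda$ has density less than $\rho^3$. Take any point $\x$ of this complement that is "deep," i.e. whose distance to $\bd(\mathbf{P}_\lambda)$ is maximal within a bounded neighbourhood; I claim a ball of radius comparable to $\rho$ around a well-chosen hole-center must be disjoint from $\mathbf{P}_\lambda$, which contradicts the density bound once $R$ is large. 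To produce such a hole I would use the packing-covering constant in scaled form: rescale so that the packing $\{\c_i+(1+\lambda)\BB^3\}$ becomes a packing of unit balls with centers $\c_i/(1+\lambda)$ and minimal distance $2/(1+\lambda)\cdot(1+\lambda)$... more precisely, the relevant ratio is that the circumradius-to-inradius comparison in a Delaunay/Voronoi cell forces a point at distance $\ge \sqrt{5/3}\cdot\frac{1}{1}$ from all centers in a suitable cell, whence a hole of inradius $\sqrt{5/3}-1-\lambda$; the denominator $11\sqrt{5/3}+3$ will emerge from estimating how large a region around that hole one is guaranteed to stay outside $\mathbf{P}_\lambda$, using the triangle inequality and the fact that neighbouring centers are at mutual distance $\ge 2$.

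\medskip

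Concretely, the key steps in order are: (i) reduce to controlling, for a fixed packing $\P$, the lower density of $\Eu^3\setminus\mathbf{P}_\lambda$; (ii) invoke $\gamma(\BB^3)=\sqrt{5/3}$ to locate, in any bounded-diameter chunk of the packing that is "too dense," a deep hole — a point at distance $\ge\sqrt{5/3}$ from all centers within that chunk — and then verify that centers \emph{outside} the chunk are also far, so that a genuine ball of radius $\sqrt{5/3}-1-\lambda$ around it avoids $\mathbf{P}_\lambda$; (iii) convert "too dense" into a quantitative statement: if the uncovered density were below $\rho^3$, the holes would be so sparse/small that the Voronoi cells containing no hole would themselves have to have circumradius below $\sqrt{5/3}$ on a set of full density, contradicting B\"or\"oczky; (iv) optimize the bookkeeping constants — this is where $11\sqrt{5/3}+3$ is pinned down, balancing the radius of the guaranteed hole against the radius of the region one must sacrifice to ensure the hole is "clean." Finally, the lattice inequality $\bar{\delta}_{\Eu^3}^{\mathrm{lattice}}(\lambda)\le\bar{\delta}_{\Eu^3}(\lambda)$ is immediate from the definitions, since lattice packings form a subclass of all packings.

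\medskip

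I expect step (iii)–(iv) to be the main obstacle: turning the existence of \emph{one} clean hole into a \emph{positive-density} family of disjoint clean holes, with explicit radius, requires a covering/packing argument on the set of hole-centers and a careful triangle-inequality chase to guarantee the balls around them neither overlap $\mathbf{P}_\lambda$ nor each other, and it is precisely the slack in these inequalities that produces the somewhat unusual constant $11\sqrt{5/3}+3$ in the denominator. The rest is comparatively routine: the limit $R\to\infty$ in the density definition, and the monotonicity remarks already recorded in the excerpt.
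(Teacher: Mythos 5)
Your high-level skeleton is the same as the paper's: locate, in every ball of a fixed radius $R_{\lambda}=11\sqrt{5/3}+3-\lambda$, a ``clean hole'' of radius $\sqrt{5/3}-1-\lambda$ disjoint from $\mathbf{P}_{\lambda}$, and then run a Fubini/averaging argument over translates of $R_{\lambda}\BB^3$ to convert this uniform local deficit into the global density bound. Your step (i), the final averaging, and the observation that $\bar{\delta}_{\Eu^3}^{\mathrm{lattice}}(\lambda)\le\bar{\delta}_{\Eu^3}(\lambda)$ is definitional are all correct and match the paper.

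The gap is exactly where you say you expect it, in steps (iii)--(iv), and it is not a bookkeeping issue that can be patched by a triangle-inequality chase: the statement $\gamma(\BB^3)=\sqrt{5/3}$, used as a black box, cannot yield holes with positive frequency. That equality is a covering-radius (supremum) statement: for any saturated unit ball packing there exists \emph{some} point at distance at least $\sqrt{5/3}$ from all centers. It is entirely consistent with this statement that all such deep points are confined to one sparse region of space, so your proposed contradiction --- ``if the uncovered set had density below $\rho^3$, the cells without holes would have circumradius below $\sqrt{5/3}$ on a set of full density, contradicting B\"or\"oczky'' --- does not contradict anything: a packing with a single deep Voronoi vertex already has covering radius $\ge\sqrt{5/3}$. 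What is actually needed, and what the paper proves as Theorem~\ref{thm:largeball}, is a \emph{localized} version of B\"or\"oczky's theorem: every ball of radius $10\sqrt{5/3}+4$ contains a Dirichlet--Voronoi vertex at distance at least $\sqrt{5/3}$ from all centers. This is obtained not from the statement $\gamma(\BB^3)=\sqrt{5/3}$ but by re-running B\"or\"oczky's combinatorial proof restricted to the $k$-neighbors ($k\le 5$) of the cell containing the center of the given ball: at most four cells meet at a vertex, faces are at most hexagons, Euler's formula forces a hexagonal face unless every nearby cell is a pentagonal dodecahedron, and the dodecahedral case is excluded by the $120$-cell argument. The constant $R_{\lambda}=10\sqrt{5/3}+4+(\sqrt{5/3}-1-\lambda)$ then comes from chaining at most five neighboring cells, each of diameter at most $2\sqrt{5/3}$ under the contradiction hypothesis, plus the radius of the hole itself; it is not produced by optimizing slack in a density estimate. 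Without supplying this local combinatorial argument (or an equivalent quantitative substitute), the proof is incomplete at its central step.
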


Perhaps not surprisingly, the {\it face centered cubic} (FCC) lattice is locally extremal among lattices from the point of view of soft densities as stated in the next theorem.

\begin{Theorem}\label{thm:localmaximum}
For any $\lambda > 0$, among lattice packings of soft balls of radii $1+\lambda$ with soft parameter $\lambda$ in $\Eu^3$, the soft density function has a local maximum at the corresponding FCC lattice.
\end{Theorem}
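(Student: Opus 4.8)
The natural route is a perturbation analysis at the FCC lattice, which I denote $L_{0}$, normalised so that its minimal vectors have length $2$ (equivalently, so that neighbouring unit balls touch); recall that $L_{0}=A_{3}$ is \emph{perfect} and \emph{eutactic}, hence extreme. First one disposes of the easy range: if $\lambda\ge\sqrt2-1$ then $1+\lambda$ is at least the covering radius of $L_{0}$, so $\PP_{\lambda}$ already equals $\Eu^{3}$ and $\bar\delta_{3}(\PP_{\lambda})=1$ is globally maximal. So fix $0<\lambda<\sqrt2-1$ and write an arbitrary nearby lattice packing as $L=(I+tS)L_{0}$ with $S$ symmetric, $\|S\|=1$ and $t\ge 0$ small (after a harmless rotation). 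Near $L_{0}$ only the $12$ minimal-vector constraints are active, so admissibility forces $v^{\top}Sv\ge 0$ (to leading order) for each of the $12$ minimal vectors $v$, and the combinatorial type of the overlap pattern of $\PP_{\lambda}$ is locally constant: the $12$ pairwise lenses along the minimal vectors, plus (when $\lambda>\tfrac{2}{\sqrt3}-1$) the $8$ triple overlaps along the minimal equilateral triangles, plus (when $\lambda>\sqrt{\tfrac32}-1$) the $2$ quadruple overlaps along the minimal regular tetrahedra. Writing $U(L,\lambda)$ for the soft volume per fundamental cell, one has $\bar\delta_{3}(\PP_{\lambda})=U(L,\lambda)/\det L$, so the assertion is $U\bigl((I+tS)L_{0},\lambda\bigr)\,\det L_{0}\le U(L_{0},\lambda)\,\det\bigl((I+tS)L_{0}\bigr)$ for all small $t\ge 0$ and all admissible $S$.

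The key simplification is that the $O(3)$-symmetry of $L_{0}$ — its eutaxy, $\sum_{v}vv^{\top}=\mathrm{const}\cdot I$ over minimal vectors, and the analogous scalar identities for the ``contact tensors'' of the minimal triangles and tetrahedra — forces every first-order variation to be proportional to $\operatorname{tr}S$. Concretely, $\tfrac{d}{dt}\det\bigl((I+tS)L_{0}\bigr)\big|_{0}=(\operatorname{tr}S)\det L_{0}$, and differentiating the inclusion–exclusion expression for $U$ and using these identities gives $\tfrac{d}{dt}U\bigl((I+tS)L_{0},\lambda\bigr)\big|_{0}=c(\lambda)\operatorname{tr}S$, where $c(\lambda)\ge 0$ is an explicit scalar built from the distance-derivatives of the two-ball lens volume $V_{\mathrm{lens}}$, of the equilateral-triangle triple-overlap volume, and of the regular-tetrahedron quadruple-overlap volume, all evaluated at the FCC inter-centre distances. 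Moreover eutaxy gives $\operatorname{tr}S\ge 0$ for every admissible $S$, and perfectness upgrades this to $\operatorname{tr}S>0$ for $S\ne 0$, with a positive lower bound on the unit tangent cone. Hence $\tfrac{d}{dt}\bar\delta_{3}(\PP_{\lambda})\big|_{0^{+}}=\dfrac{\operatorname{tr}S}{\det L_{0}}\bigl(c(\lambda)-U(L_{0},\lambda)\bigr)$, so local maximality at $L_{0}$ will follow once we establish the single-variable inequality $c(\lambda)<U(L_{0},\lambda)$ for all $0<\lambda<\sqrt2-1$; and if equality ever occurs, from the second-order term, which the same symmetry organises into a quadratic form that one shows is negative definite on the tangent cone using perfectness.

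It remains to verify $c(\lambda)<U(L_{0},\lambda)$. For $0<\lambda\le\tfrac{2}{\sqrt3}-1$ only the lenses contribute, $c(\lambda)=4\pi\lambda(2+\lambda)$ and $U(L_{0},\lambda)=\tfrac43\pi(1+\lambda)^{3}-6\,V_{\mathrm{lens}}(2,1+\lambda)$, and the inequality reduces to an elementary polynomial estimate. The substantive work is the range $\tfrac{2}{\sqrt3}-1<\lambda<\sqrt2-1$: there both $c(\lambda)$ and $U(L_{0},\lambda)$ acquire the triple- and quadruple-overlap corrections, the sign of $c(\lambda)-U(L_{0},\lambda)$ is no longer transparent, and one has to compute (or sharply estimate) the triple- and quadruple-overlap volumes of congruent balls centred at the vertices of a regular triangle, resp.\ a regular tetrahedron, together with their derivatives in the edge length — closed-form but unpleasant spherical-cap integrals. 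I expect the genuinely delicate point to be the sub-range $\lambda\uparrow\sqrt2-1$ near the covering threshold, where $U(L_{0},\lambda)\uparrow\det L_{0}$ and the margin in $c(\lambda)<U(L_{0},\lambda)$ shrinks; there one must in addition confirm separately that no lattice packing near $L_{0}$ has soft density $1$, i.e.\ control the covering radius of the perturbed lattices from below, and this — rather than the lens/triangle/tetrahedron bookkeeping — is where I anticipate the main obstacle.
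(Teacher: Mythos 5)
Your overall strategy --- a first-order perturbation at the FCC lattice, with the eutaxy/perfection of $A_3$ collapsing every first-order variation to a multiple of $\operatorname{tr}S$, so that the whole problem reduces to one scalar inequality $c(\lambda)<U(L_0,\lambda)$ --- is structurally the same as the paper's, and your treatment of the degenerate directions (perfection forcing $\operatorname{tr}S>0$ on the admissible unit cone) is if anything tidier than the paper's appeal to the rigidity of $12$-neighbour packings. The genuine gap is that the decisive inequality $c(\lambda)<U(L_0,\lambda)$ is left unproved exactly on the range $\tfrac{2}{\sqrt3}-1<\lambda<\sqrt2-1$ where you concede the triple- and quadruple-overlap integrals become unpleasant; as written the argument is complete only for $\lambda\le\tfrac{2}{\sqrt3}-1$. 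What you are missing is that this inequality has a one-line geometric proof that makes the inclusion--exclusion bookkeeping unnecessary. Csik\'os's differentiation formula for the volume of a union of moving balls \cite{Csikos} (the tool the paper uses) identifies the first derivative of the covered volume per cell as $\tfrac12\sum_{i=1}^{12}d_i'\,\area(W_i)$, where $W_i=(1+\lambda)\BB^3\cap F_i$ is the part of the $i$-th Voronoi facet covered by the central soft ball; by transitivity of the symmetry group on the facets this equals $\tfrac{W}{2}\sum_i d_i'$ with $W=\area(W_i)$, so that your $c(\lambda)$ is just $4W$ (for small $\lambda$ this is your $4\pi\lambda(2+\lambda)$, and for larger $\lambda$ Csik\'os's walls automatically encode the clipping that your triple and quadruple corrections are meant to produce). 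On the other hand, $U(L_0,\lambda)=\vol_3\bigl(V\cap(1+\lambda)\BB^3\bigr)$ contains the twelve pairwise disjoint cones with apex $\o$ over the $W_i$, each of height $1$, of total volume $\tfrac{12}{3}W=4W$, and these cones do not exhaust $V\cap(1+\lambda)\BB^3$. Hence $c(\lambda)=4W<U(L_0,\lambda)$ holds uniformly, with no spherical-cap integrals.

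Two further remarks. The obstacle you anticipate as $\lambda\uparrow\sqrt2-1$ is not there: the margin $U(L_0,\lambda)-4W$ is the volume of $V\cap(1+\lambda)\BB^3$ outside the union of the twelve cones, which is bounded below by a fixed positive quantity (the unit ball alone already contains points outside all of them), and the strictly negative first derivative along every admissible ray makes any separate control of the covering radius of the perturbed lattices superfluous. Second, your claim that the first-order variation of the triple- and quadruple-overlap terms is itself proportional to $\operatorname{tr}S$ uses more than eutaxy: it needs the symmetry group to act transitively on minimal vectors and each minimal vector to lie in the same number of minimal triangles (resp.\ tetrahedra). This is true for FCC, but it should be stated; it is precisely the transitivity hypothesis the paper isolates in its concluding remark as the condition under which the argument generalizes.
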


We cannot resist to raise the problem of finding the largest soft density of lattice packings (resp., packings) of soft balls of radii $1+\lambda$ with soft parameter $\lambda$ for given $\lambda>0$ in $\Eu^3$.

\begin{Conjecture}
Prove the existence of $\lambda_0>0$ such that for any $0<\lambda \leq \lambda_0$, among all lattice packings (resp., packings) of soft balls of radii $1+\lambda$ with soft parameter $\lambda$ in $\Eu^3$, the soft density function has a maximum at the corresponding FCC lattice.
\end{Conjecture}

\begin{Remark}
We note that the method of the proof of Theorem~\ref{thm:localmaximum} presented in Section~\ref{local-max} works under two conditions: every facet of the Dirichlet-Voronoi cell of a ball belongs to a minimal vector of the lattice, and that the symmetry group of the lattice acts transitively on the faces. Thus, Theorem~\ref{thm:localmaximum} holds also for all known densest lattice packings, namely for the $D_4, D_5, E_6, E_7, E_8$ lattices and the Leech lattice.
\end{Remark}

In the rest of this note we prove the theorems stated. We shall use the standard notations ${\rm int}(\cdot), {\rm bd}(\cdot), {\rm cl}(\cdot)$, and ${\rm conv}(\cdot)$ for the interior, boundary, closure, and convex hull of the corresponding sets.

\section{Proof of Theorem~\ref{thm:lattice}}

The proof relies on the properties of Voronoi and Delaunay tesselations in normed planes; for a broader introduction on this topic, the interested reader is referred to \cite{AKL} or \cite{Lambert}. Recall that if $\p$ and $\q$ are points of the normed plane $\M$ with unit disk $\MM$, then $\|\p-\q\|_{\MM}$ stands for the $\MM$-distance from $\p$ to $\q$. Without loss of generality, we may assume that $\MM$ is smooth and strictly convex. Observe that in this case, for any points $\p \neq \q$, any line parallel to $[\p,\q]$ contains a unique point whose $\MM$-distance from $\p$ and $\q$ are equal,
where $[\p,\q]$ stands for the (closed) line segment connecting the points $\p$ and $\q$. Thus, the bisector of any segment is homeomorphic to a line. Furthermore, we assume that $0 < \lambda <  \gamma(\MM)-1$, as otherwise the statement of Theorem~\ref{thm:lattice} is trivial. 

Let $\mathcal{C} \subset \M$ be a point set such that $\mathcal{P}=\{ \c + \MM: \c \in \C \}$ is a packing of translates of $\MM$. We show that there is a lattice packing $\P'$ satisfying $\bar{\delta}_2(\PP_{\lambda}) \leq \bar{\delta}_2(\PP'_{\lambda})$.
Clearly, we may assume that $\P$ is \emph{saturated}, i.e. the $\MM$-distance of any point of $\M$ is strictly less than two from some point of $\C$, as otherwise we may add additional translates of $\MM$ to $\P$.

For any $\c \in \C$, the \emph{Voronoi cell} of $\C$ is defined in the usual way as $V_{\c} = \{ \x \in \M : ||\x-\c||_{\MM} \leq ||\x -\c' ||_{\MM} \hbox{ for any } \c' \in \C \}$.
The Voronoi cell of a point $\c$ is not necessarily convex, but it is starlike with respect to $\c$; i.e. it satisfies the property that for any $\x \in V_{\c}$, $[\c,\x] \subset V_{\c}$ holds. It is also known that in the above case the Voronoi cells of the points of $\C$ form a locally finite decomposition of $\M$. The \emph{Delaunay tessellation} assigned to $\C$ is the `dual' of the Voronoi decomposition; there, two points $\c,\c' \in \C$ are connected by the edge $[\c,\c']$ if and only if the unique smallest homothetic copy of $\MM$ containing $\c,\c'$ contains no other point of $\C$. This also defines a locally finite decomposition of $\M$ into convex polygons, called \emph{Delaunay cells}, which satisfy the following (see also \cite{Lambert}):
\begin{itemize}
\item[(i)] the vertices of any Delaunay cell $F$ are points in $\C$;
\item[(ii)] for any Delaunay cell $F$ there is a homothetic copy $\x + \lambda \MM$, containing all vertices of $F$ in its boundary, which contains no point of $\C$ apart from the vertices of $F$; this homothet is called the \emph{circumdisk} of $F$.
\end{itemize}

In the next part we introduce another locally finite, edge-to-edge decomposition, which we call the \emph{Moln\'ar decomposition}, as follows. This decomposition, in the case that $\MM$ is a Euclidean disk, was introduced in \cite{Molnar}.

Let $F$ be a cell of the Delaunay decomposition, and let us denote the circumdisk of $F$ by $\MM_F \subset \M$, and the center of $\MM_F$ by $\o_F$.
If $F$ does not contain $\o_F$, then there is a unique side of $F$ that separates it from $F$ in $\MM_F$. We call this side a \emph{separating side} of $F$. If $[\c_i, \c_j ]$ is a separating side of $F$, then we call the polygonal curve $[\c_i, \o_F ] \cup [\o_F, \c_j]$ the \emph{bridge} associated to the separating side $[\c_i, \c_j ]$ of $F$ (cf. Figure~\ref{fig:molnar}).

\begin{figure}[ht]
\begin{center}
 \includegraphics[width=0.3\textwidth]{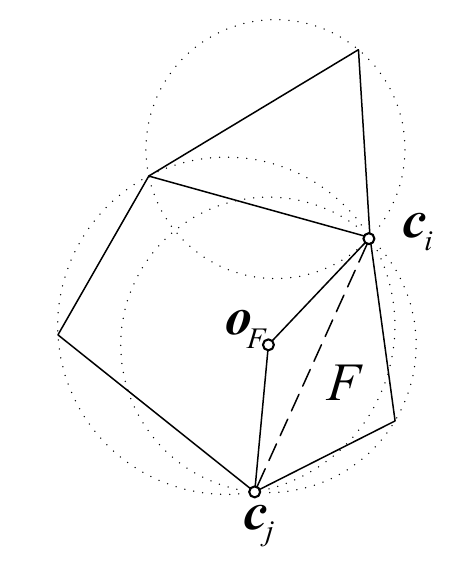}
 \caption{An illustration for the Moln\'ar decomposition of a point system in the case that $\MM$ is a Euclidean disk. In the figure, the edges of the cells, the circumcircles of the cells and the separating sides are denoted by solid, dotted and dashed lines, respectively.}
\label{fig:molnar}
\end{center}
\end{figure}

Our main lemma about the Moln\'ar decomposition is the following.

\begin{Lemma}\label{lem:K_Molnar}
If we replace all separating sides of a Delaunay decomposition by the corresponding bridges, we obtain a locally finite, edge-to-edge decomposition of $\M$.
\end{Lemma}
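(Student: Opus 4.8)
\emph{Plan of proof.} The plan is to realize the passage from the Delaunay decomposition $\mathcal{D}$ of $\C$ to the Moln\'ar decomposition as a family of local ``ear-transfers'' --- one for each separating side --- and then to verify that these are mutually compatible and leave the tessellation locally finite and free of $T$-junctions. (If $\C$ is not in general position a Delaunay cell may have more than three vertices, but it still has at most one separating side, and the argument below carries over with purely notational changes; so we argue as if every Delaunay cell is a triangle, no circumcenter lies on a Delaunay edge, and no circumcenter is a point of $\C$.) First, one records a monotonicity fact: fixing an edge $e=[\c_i,\c_j]$ of $\mathcal{D}$, the $\MM$-bisector $B$ of $\c_i$ and $\c_j$ meets the line $\aff(e)$ only at the midpoint $m=\tfrac12(\c_i+\c_j)$ (by central symmetry of $\MM$), and if $D_t$ denotes the homothet of $\MM$ with center $\beta(t)\in B$ through $\c_i$ and $\c_j$, where $\beta$ parametrises $B$ with $\beta(0)=m$, then the radius of $D_t$ is strictly increasing in $|t|$ and the part of $D_t$ lying on either fixed side of $\aff(e)$ varies monotonically with $t$. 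From this one deduces that each edge of $\mathcal{D}$ is a separating side of at most one of the two cells $F,F'$ containing it: were it a separating side of both, then $\o_F$ would lie strictly on the $F'$-side and $\o_{F'}$ strictly on the $F$-side of $\aff(e)$, and comparing the circumdisks $\MM_F=D_{t_F}$ and $\MM_{F'}=D_{t_{F'}}$ via the monotonicity would push a vertex of one cell into the interior of the circumdisk of the other, contradicting the empty-circumdisk property~(ii). In particular each separating side carries a well-defined bridge.

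\emph{The bridge triangle lies in the interior of the neighbouring cell.} Fix a separating side $e=[\c_i,\c_j]$ of $F$, let $F'$ be the cell on the other side of $e$, and set $T_F:=\conv\{\c_i,\o_F,\c_j\}$. Since the circumdisk $\MM_F=\o_F+r_F\MM$ contains no point of $\C$ in its interior while $\|\o_F-\c_i\|_{\MM}=\|\o_F-\c_j\|_{\MM}=r_F$, the point $\o_F$ lies in $V_{\c_i}\cap V_{\c_j}$ and is in fact a vertex of the Voronoi decomposition dual to $F$. Feeding this into the preceding step (which forces $\o_{F'}$ onto the $F'$-side of $\aff(e)$) and into the monotonicity fact (which gives $r_F<r_{F'}$ and, after a short computation, $\o_F\in\inter\MM_{F'}$) should yield $\o_F\in\inter F'$. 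Granting this, the convexity of $F'$ gives $T_F\subseteq F'$ with the relative interiors of the two bridge edges $[\c_i,\o_F]$ and $[\o_F,\c_j]$ contained in $\inter F'$; and since $\o_F\notin F$ while $F$ and $F'$ lie on opposite sides of $\aff(e)$, one also gets $T_F\cap F=e$. Finally $\inter T_F\subseteq\inter\MM_F$, so $T_F$ meets $\C$ only in $\c_i$ and $\c_j$.

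\emph{Assembling the decomposition.} Two distinct bridge triangles have disjoint interiors: if their separating sides belong to different Delaunay cells this is immediate from the previous step; if $e_F$ and $e_G$ are two sides of one cell $H$, then $T_F\subseteq\MM_F\cap H$ and $T_G\subseteq\MM_G\cap H$, and because no vertex of $H$ lies in $\inter\MM_F$ or $\inter\MM_G$, each of $\MM_F\cap H$, $\MM_G\cap H$ is confined to the corner of $H$ at $e_F$, resp.\ $e_G$, so $T_F$ and $T_G$ occupy disjoint portions of $H$. Consequently the planar graph $\Gamma$ obtained from the $1$-skeleton of $\mathcal{D}$ by deleting every separating side and inserting the corresponding bridge is embedded (the bridges are pairwise non-crossing and, by the previous step, cross no retained Delaunay edge) and locally finite (only finitely many Delaunay cells meet a given compact set, and each bridge lies in $\overline{F}\cup\overline{F'}$ for the cell $F$ owning its separating side). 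Its faces are obtained from the cells of $\mathcal{D}$ by replacing each $F$ with $\bigl(F\setminus\bigcup_{G:\ e_G\subseteq\bd F}T_G\bigr)\cup T_F$, the last term being present exactly when $F$ has a separating side; these faces are bounded and simply connected, tile $\M$ with pairwise disjoint interiors, and along each edge of $\Gamma$ --- a sub-segment of a retained Delaunay edge, or a bridge edge $[\c_i,\o_F]$ whose relative interior lies in $\inter F'$ --- precisely the two faces on its two sides meet. This is the desired locally finite, edge-to-edge decomposition.

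\emph{Main obstacle.} The delicate point is the claim, in the second paragraph, that the circumcenter $\o_F$ of a cell having a separating side falls in the \emph{interior} of the neighbouring cell $F'$ and not in one of the caps that the other sides of $F'$ cut off from its circumdisk. This, together with the monotonicity statements, is where the smoothness and strict convexity of $\MM$ --- and the saturation of $\P$, which keeps all circumradii below $2$ --- enter, and it rests on the finer structure of Voronoi--Delaunay duality in normed planes (cf.\ \cite{Lambert}, \cite{AKL}). Once this is in hand, the remaining verifications are essentially bookkeeping.
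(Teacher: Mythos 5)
There is a genuine gap, and it sits exactly where you flag it. Your whole construction rests on the claim in your second paragraph that the circumcenter $\o_F$ of a cell with separating side $[\c_i,\c_j]$ lies in $\inter F'$, so that the bridge triangle $T_F=\conv\{\c_i,\o_F,\c_j\}$ is contained in the single neighbouring cell $F'$. You do not prove this (you write that the monotonicity facts ``should yield'' it and defer to the finer structure of Voronoi--Delaunay duality), and the paper's own proof is structured precisely to avoid ever asserting it. What the paper actually establishes is the weaker containment $T_F\subseteq \MM_{F'}$ together with $T_F\setminus[\c_i,\c_j]\subseteq\inter\conv\{\c_i,\c_j,\o_{F'}\}$; from this one gets $T_F\subseteq F'$ \emph{only} in the case $\o_{F'}\in F'$. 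In the remaining case $\o_{F'}\notin F'$, the cell $F'$ has its own separating side $[\c_{i'},\c_{j'}]$, the triangle $T_F$ may escape $F'$ --- but only through that side, which is itself deleted in the Moln\'ar decomposition --- and the argument is then repeated with $F'$ in place of $F$, terminating because only finitely many cells meet $T_F$. So the bridge may genuinely traverse a chain of several Delaunay cells; the correct statement is not ``$T_F$ lies in $F'$'' but ``$T_F$ can only cross separating sides, all of which are removed.'' Since your third paragraph (disjointness of bridge triangles, the description of the new faces, local finiteness, and the edge-to-edge property) is built entirely on the unproven containment, it does not stand on its own.

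A secondary point: for the mutual non-crossing of bridges you again route through the containment $T_F\subseteq F'$ (``immediate from the previous step''), whereas the paper has a short, self-contained argument that does not need it: if $[\o_F,\c_i]$ and $[\o_{F'},\c_j]$ are bridge legs with $\c_i\neq\c_j$, $\o_F\neq\o_{F'}$, then the set $Z$ of points $\MM$-closer to $\o_F$ than to $\o_{F'}$ is starlike with respect to $\o_F$ and contains $\c_i$ (the points of $\C$ nearest to a circumcenter are exactly the vertices of its cell), hence contains $[\o_F,\c_i]$, while it is disjoint from $[\o_{F'},\c_j]$. Your opening monotonicity observation (an edge is a separating side of at most one of its two cells) is fine and consistent with what the paper uses implicitly, but to repair the proof you need to replace the ``bridge triangle lies in $\inter F'$'' step by the paper's chain argument, or else actually prove that containment under the saturation hypothesis --- which the authors evidently did not consider automatic.
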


\begin{proof}
As in \cite{Molnar}, the proof is based on showing the following two statements.
\begin{itemize}
\item[(a)] Bridges may intersect only at their endpoints.
\item[(b)] A bridge may intersect an edge in the Moln\'ar decomposition only at its endpoints.
\end{itemize}

To show (a), recall that for any cell $F$, the points of $\C$ closest to $\o_F$ (measured in normed distance) are exactly the vertices of $F$. Thus, if $[\o_F, \c_i]$ and $[\o_{F'}, \c_j]$ are components of bridges with $\c_i \neq \c_j$ and $\o_F \neq \o_{F'}$, then $\c_i$ is closer to $\o_F$ than to $\o_{F'}$, and $\c_j$ is closer to $\o_{F'}$ than to $\o_F$. Hence, if $Z$ is the set of points in $\M$ closer to $\o_F$ than to $\o_{F'}$, then $Z$ contains $[\o_F,\c_i]$ but it is disjoint from $[\o_{F'}, \c_j ]$, implying that $[\o_F,\c_i]$ and $[\o_{F'}, \c_j ]$ are disjoint.

Now, we show (b), and let $[ \c_i, \c_j]$ be a separating side of the cell $F$. First, observe that by the definition of separating side, the triangle $T$ with vertices $\c_i$, $\c_j$ and $\o_F$ intersects finitely many cells of the Delaunay-decomposition at a point different from $[ \c_i, \c_j]$, each of which is different from $F$. Let $F'$ be the cell adjacent to $F$ and having $[ \c_i, \c_j]$ as a side. Let $\MM_{F'}$ be the circumdisk of $F'$.
By the properties of bisectors and Voronoi cells described at the beginning of the section, the triangle $T = \conv \{ \c_i, \c_j, \o_F\}$ is contained in $\MM_{F'}$, and it is easy to see that it is contained also in the interior of $T'=\conv \{ \c_i, \c_j, \o_{F'} \}$ apart from $[\c_i,\c_j]$. Thus, on the one hand, if $\o_{F'} \in F'$, then $T \setminus [\c_i,\c_j] \subseteq \inter (F')$, and (b) clearly holds for the bridge associated to $[\c_i,\c_j]$. On the other hand, if $\o_{F'} \notin F'$, then there is a separating side $[\c_{i'},\c_{j'}]$ of $F'$, removed during the construction, and this side is the unique side containing any point of the bridge apart from $\c_i$ and $\c_j$. Thus, no side of $F'$ in the Moln\'ar decomposition contains an interior point of the bridge. Furthermore, observe that if any side in the Moln\'ar decomposition contains an interior point of $[\c_i,\o_F] \cup [\o_F, \c_j]$, then it also contains an interior point of $[\c_{i'}, \o_{F'}] \cup [\o_{F'}, \c_{j'}]$. Consequently, we may repeat the argument with $F'$ playing the role of $F$, and since only finitely many cells may intersect $T$ at a point different from $[ \c_i, \c_j]$, and each such cell is different from $F$, we conclude that $[ \c_i, \o_F] \cup [\o_F, \c_j]$ may intersect any side of the Moln\'ar decomposition only at $\c_i$ or $\c_j$.
\end{proof}

Let $[\c_i,\c_j]$ be an edge of a Delaunay cell $F$, and let $T(\c_i,\c_j)$ be an equilateral triangle with edgelength $2$ such that an edge of $T(\c_i,\c_j)$ is parallel to $[\c_i,\c_j]$. Clearly, the circumradius of $T(\c_i,\c_j)$, which we denote by $R(\c_i,\c_j)$, is not greater than the circumradius of $F$. Furthermore, as $\MM$ has $3$-fold rotational symmetry, $T(\c_i,\c_j)$ is a regular triangle in the underlying Euclidean norm as well.

Let us dissect every cell $F$ of the Moln\'ar decomposition by connecting the circumcenter of the corresponding Delaunay cell to each vertex of $F$. We call the so-obtained decomposition the \emph{refined Moln\'ar decomposition} associated to $\C$. Note that then every cell $F$ of this decomposition is of the form $\cl \left(\conv\{\a,\b,\c \} \setminus \conv\{\a,\b,\c' \}\right)$, where $\a,\b \in \C$, $\c$ is the center of a Delaunay cell, and $\c'$ is the center of another Delaunay cell or the midpoint of $[\a,\b]$, furthermore, we have $2 > ||\b-\c||_{\MM}=||\a-\c||_{\MM} \geq R(\a,\b)$, $||\b-\c'||_{\MM}=||\a-\c'||_{\MM}$, and $||\a-\b||_{\MM} \geq 2$. We note that since $0 < \lambda < \gamma(\MM)-1$, $\c$ does not lie in the interior of any soft disk centered at a point of $\C$.
We set
\[
\delta(F) := \frac{\vol_2\left( \left((\a+(1+\lambda)\MM) \cup (\b+(1+\lambda)\MM)\right) \cap F \right)}{\vol_2(F)}.
\]
To prove Theorem~\ref{thm:lattice}, it is sufficient to prove that for any cell $F$, $\delta(F)$ is maximal with the direction of $[\a,\b]$ fixed, if and only if $||\a-\b||_{\MM} = 2$, $\c'$ is the midpoint of $[\a,\b]$, and $||\b-\c||_{\MM}=||\a-\c||_{\MM} = R(\a,\b)$. This statement is an immediate consequence of Lemmas~\ref{lem:legs} and \ref{lem:base}, and it implies that $\bar{\delta}_{\M}(\lambda)= \bar{\delta}_{\M}^{\mathrm{lattice}}(\lambda)$, which is attained by a regular triangle lattice packing of soft disks with soft parameter $\lambda$ in $\M$.

\begin{Lemma}\label{lem:legs}
Let $T=\conv \{ \a,\b,\c \}$ and $T'= \conv \{ \a,\b,\c' \}$ be isosceles triangles (in the norm of $\M$) with base $[\a,\b]$, where $||\a-\b||_{\MM} \geq 2$, and assume that $\c$ is farther from $\a,\b$ than $\c'$. Let $\rho(T) := \frac{\vol_2\left(T \cap \left( (\a+(1+\lambda) \MM) \cup (\b+(1+\lambda) \MM) \right)\right)}{\vol_2(T)}$, and define $\rho(T')$ similarly. Then $\rho(T) \leq \rho(T')$, with equality if and only if $T \subseteq (\a+(1+\lambda) \MM) \cup (\b+(1+\lambda) \MM)$.
\end{Lemma}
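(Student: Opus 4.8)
The plan is to prove the sharper one‑parameter statement that the covered‑volume fraction of a norm‑isoceles triangle over the fixed base $[\a,\b]$ is monotone in the distance of its apex. As in the main proof I assume $\MM$ smooth and strictly convex and $0<\lambda<\gamma(\MM)-1$. Writing $\mathbf{m}$ for the midpoint of $[\a,\b]$ and $s_0:=\tfrac12\|\a-\b\|_{\MM}$, I parametrize the bisector of $[\a,\b]$ in $\M$ (which, by the stated hypotheses, meets every line parallel to $[\a,\b]$ exactly once) so that for each $s\ge s_0$ there is a point $\x_s$ on it with $\|\x_s-\a\|_{\MM}=\|\x_s-\b\|_{\MM}=s$ and $\x_{s_0}=\mathbf{m}$; set $T_s:=\conv\{\a,\b,\x_s\}$, $U:=(\a+(1+\lambda)\MM)\cup(\b+(1+\lambda)\MM)$ and $\rho(s):=\vol_2(T_s\cap U)/\vol_2(T_s)$. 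Since $\c,\c'$ lie on the bisector, $T=T_{s}$ and $T'=T_{s'}$ for $s=\|\a-\c\|_{\MM}>s'=\|\a-\c'\|_{\MM}$, where $s'>s_0$ because $T'$ is nondegenerate (I treat the case $\c,\c'$ on the same side of $\aff\{\a,\b\}$, the one relevant here). It therefore suffices to show that $\rho$ is non‑increasing on $(s_0,\infty)$ and strictly decreasing at every $s$ with $\rho(s)<1$.

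Next I would reduce this to a pointwise inequality. Using standard regularity of bisectors in smooth strictly convex normed planes — in particular that $s\mapsto T_s$ is increasing — the collar $T_{s+\epsilon}\setminus T_s$ star‑triangulates from $\x_s$ as $\conv\{\a,\x_s,\x_{s+\epsilon}\}\cup\conv\{\b,\x_s,\x_{s+\epsilon}\}$. As $\epsilon\to0^+$ each of these thin triangles is a wedge at $\a$ (resp.\ $\b$) that reaches out to norm‑distance $\to s$ along every ray, so its $U$‑density tends to $d(s):=\min\{1,((1+\lambda)/s)^2\}$: when $s\le1+\lambda$ the wedge lies inside its own ball, while when $s>1+\lambda$ every ray from $\a$ leaves $\a+(1+\lambda)\MM$ at norm‑parameter $1+\lambda$ and leaves the wedge at norm‑parameter $\to s$, and it acquires no further coverage from the other ball — the relevant edge being the edge $[\a,\x_s]$ of $T_s$ itself, so that any such extra coverage would also appear in $T_s$ and only help. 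Hence $\rho(s+\epsilon)-\rho(s)$ has, for small $\epsilon>0$, the sign of $d(s)-\rho(s)$, and the required monotonicity follows from
$$\rho(s)\ \ge\ d(s)\quad (s>s_0),\qquad\text{with }\rho(s)>d(s)\ \text{whenever}\ \rho(s)<1 .$$

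To establish this estimate: if $s\le1+\lambda$ then $T_s\subseteq U$, so $\rho(s)=1=d(s)$ — indeed for $\x=\alpha\a+\beta\b+\gamma\x_s\in T_s$, using $\|\a-\b\|_{\MM}\le2s$ one gets $\|\x-\a\|_{\MM}\le(2\beta+\gamma)s$ and $\|\x-\b\|_{\MM}\le(2\alpha+\gamma)s$, whose minimum is $\le s$. So assume $s>1+\lambda$, and with $\nu:=\min\{1,(1+\lambda)/\|\a-\b\|_{\MM}\}$ (note $\nu<1$ since $\|\a-\b\|_{\MM}\ge2>1+\lambda$) consider inside $T_s$ the triangle $\Delta_\a:=\conv\{\a,\ \a+\nu(\b-\a),\ \a+\tfrac{1+\lambda}{s}(\x_s-\a)\}$ and the triangle $\Delta_\b$ obtained by exchanging $\a$ and $\b$. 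Every vertex of $\Delta_\a$ is within norm‑distance $1+\lambda$ of $\a$ (using $\|\x_s-\a\|_{\MM}=s$), so $\Delta_\a\subseteq\a+(1+\lambda)\MM$ by convexity, and likewise $\Delta_\b\subseteq\b+(1+\lambda)\MM$; both lie in $T_s$; and, being a copy of $T_s$ shrunk by factors $\nu$ and $\tfrac{1+\lambda}{s}$ along the edges $[\a,\b]$ and $[\a,\x_s]$, each has $\vol_2(\Delta_\a)=\vol_2(\Delta_\b)=\nu\tfrac{1+\lambda}{s}\vol_2(T_s)$. If $\|\a-\b\|_{\MM}\ge2(1+\lambda)$ (so $\nu\le\tfrac12$), the two balls meet in at most a point, $\Delta_\a,\Delta_\b$ have disjoint interiors, and $\rho(s)\ge2\nu\tfrac{1+\lambda}{s}=2(1+\lambda)^2/(s\|\a-\b\|_{\MM})\ge((1+\lambda)/s)^2$, the last step being exactly $2s\ge\|\a-\b\|_{\MM}$, i.e.\ $s\ge s_0$, strict for $s>s_0$. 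If $2\le\|\a-\b\|_{\MM}<2(1+\lambda)$ (so $\nu\in(\tfrac12,1)$), I would coordinatize with $\a=\o$, $\b=(w,0)$, $\x_s=(x_0,h)$, slice $\Delta_\a,\Delta_\b$ by horizontal lines, and compute $\vol_2(\Delta_\a\cap\Delta_\b)=\tfrac12 h^{*}w\,(2\nu-1)^2/(2\nu-\mu)$ with $\mu:=\tfrac{1+\lambda}{s}\in(0,1)$ and $h^{*}:=\mu h$; then $\vol_2(\Delta_\a\cup\Delta_\b)/\vol_2(T_s)\ge\mu^2$ simplifies to $4\nu\ge1+\mu$, which holds comfortably since $4\nu>2>1+\mu$, again strict. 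In both cases $\rho(s)>d(s)$.

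Finally, the monotonicity gives $\rho(T)=\rho(s)\le\rho(s')=\rho(T')$; and if equality holds then $\rho$ is constant on $[s',s]$, hence (were $\rho<1$ anywhere on this interval, the estimate would force strict decrease there) $\rho\equiv1$, i.e.\ $\rho(T)=1$, i.e.\ $T\subseteq U$ — the converse implication being immediate from $T'\subseteq T$. The hard part will be the second step: making the $\epsilon\to0^+$ limit rigorous — the "two thin wedges" description of the collar and the evaluation of its limiting $U$‑density as $d(s)$ — which relies on the regularity of bisectors in smooth strictly convex normed planes (so that $T_s$ increases and the collar star‑triangulates as stated) and on controlling, or absorbing into $T_s$, any coverage of a wedge by the ball centered at its far vertex when $s>1+\lambda$. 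Once that is in place, everything reduces to the elementary two‑triangle estimate of the third step, in which the condition $s\ge s_0=\tfrac12\|\a-\b\|_{\MM}$ — precisely the statement that $\x_s$ lies on the bisector — is exactly what makes the inequality hold.
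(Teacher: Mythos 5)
Your overall strategy is the same as the paper's: compare the soft-disk density of the triangle against the threshold $(1+\lambda)^2/\mu^2$ from inside (density of the triangle with leg $\mu$ is at least this) and from outside (density of the annular region beyond leg-length $\mu$ is at most this). The paper does this as a single finite comparison of $T'$ with $\cl(T\setminus T')$ and simply asserts both density estimates with reference to a figure; you instead differentiate in the leg length $s$ and integrate, and you supply an explicit inscribed-triangle construction ($\Delta_{\a},\Delta_{\b}$, with the inclusion--exclusion computation when they overlap) for the lower bound $\rho(s)\ge\min\{1,((1+\lambda)/s)^2\}$. That part of your argument is a genuine improvement in explicitness over the paper, and the reduction of the key inequality to $2s\ge\|\a-\b\|_{\MM}$ (i.e.\ to $\x_s$ lying on the bisector) is exactly the right mechanism.

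However, the step you yourself flag as ``the hard part'' contains a genuine gap, and the justification you sketch for it is not merely incomplete but points the wrong way. You need the collar $T_{s+\epsilon}\setminus T_s$ to have limiting $U$-density exactly $d(s)$, and in particular that the wedge at $\a$ receives no coverage from $\b+(1+\lambda)\MM$. Your parenthetical --- that any such extra coverage ``would also appear in $T_s$ and only help'' --- is backwards: extra coverage of the collar \emph{raises} the collar's density above $d(s)$ and therefore works \emph{against} the monotonicity you are trying to prove; coverage appearing inside $T_s$ is already accounted for in $\rho(s)$ and cannot compensate. The correct repair (and the one implicit in the paper's decomposition of $T\setminus T'$ into $\conv\{\a,\c',\c\}$ and $\conv\{\b,\c',\c\}$ by the chord $[\c',\c]$ of the bisector) is to split the collar along the bisector of $[\a,\b]$ and observe that on the closed side containing $\a$, i.e.\ where $\|\p-\a\|_{\MM}\le\|\p-\b\|_{\MM}$, every point of $\b+(1+\lambda)\MM$ already lies in $\a+(1+\lambda)\MM$, so the trace of $U$ there is governed by $\a$'s ball alone and the ray-from-$\a$ estimate applies. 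Relatedly, your claim that $s\mapsto T_s$ is increasing (so that the collar is exactly the union of the two wedges $\conv\{\a,\x_s,\x_{s+\epsilon}\}$ and $\conv\{\b,\x_s,\x_{s+\epsilon}\}$) is asserted as ``standard regularity of bisectors'' but not proved: it requires showing that the bisector arc between $\x_s$ and $\x_{s+\epsilon}$ stays inside $\conv\{\a,\b,\x_{s+\epsilon}\}$, which does not follow just from the bisector meeting every line parallel to $[\a,\b]$ once. Until these two points are settled, the $\epsilon\to 0^+$ limit evaluation of the collar density --- and hence the monotonicity of $\rho$ --- is not established; the rest of your argument (the case $s\le 1+\lambda$, the inscribed-triangle lower bound, and the equality discussion) is sound.
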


\begin{proof}
Without loss of generality, we may assume that $\c' \in T$. Let $||\c'-\b||_{\MM}=||\c'-\a||_{\MM} = \mu$. We consider only the case that $\mu \geq 1+ \lambda$, in the opposite case a similar argument can be given. Note that under this condition the density $\rho(T')$ of $(\a+(1+\lambda) \MM) \cup (\b+(1+\lambda)\MM)$ in $T'$ is greater than $\frac{(1+\lambda)^2}{\mu^2}$, and in the region $\cl( T \setminus T')$ it is less than $\frac{(1+\lambda)^2}{\mu^2}$ (see Figure~\ref{fig:legs}). This implies the assertion.
\end{proof}

\begin{figure}[h]
\centering
\begin{minipage}{0.35\textwidth}
\centering
\includegraphics[width=0.8\textwidth]{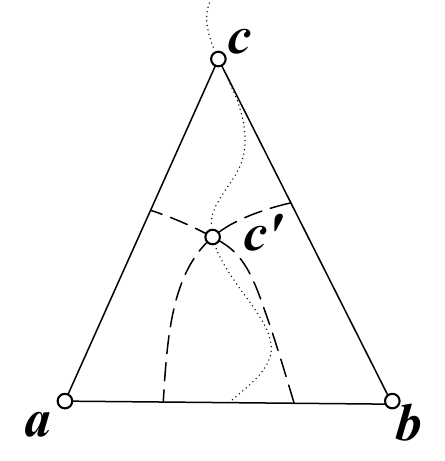}
\caption{An illustration for Lemma~\ref{lem:legs}. The dotted line indicates the bisector of $[\a,\b]$. Dashed lines show the boundaries of $\a + \mu \MM$ and $\b + \mu \MM$.}
\label{fig:legs}
\end{minipage}
\hskip0.08\textwidth
\begin{minipage}{0.45\textwidth}
\centering
\includegraphics[width=0.95\textwidth]{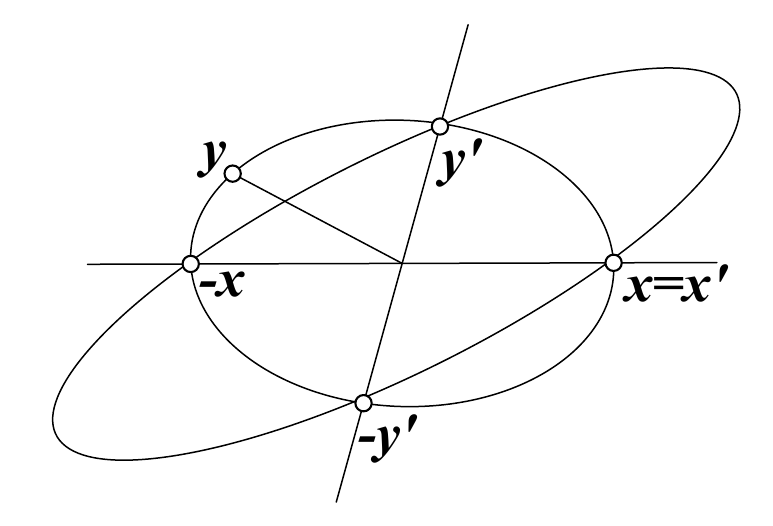}
\caption{An illlustration for Lemma~\ref{rem:geomobs} for the case $\x=\x'$.}
\label{fig:base}
\end{minipage}
\end{figure}

In the following lemma, for any $\p,\q \in \bd \MM$ which are not antipodal, $\widehat{\p \q}$ denotes the shorter arc in $\bd \MM$ connecting $\p$ and $\q$.

\begin{Lemma}\label{rem:geomobs}
Let $\x, \x', \y', \y \in \bd \MM$ in this counterclockwise order in $\bd \MM$ such that no two of them are antipodal, $\widehat{\x'\y'} \subsetneq \widehat{\x\y}$, and $\x' \neq \y'$.
Let $A$ be the linear transformation defined by $A(\x)=\x'$ and $A(\y)=\y'$. Then $A(\widehat{\x\y}) \cap \inter \MM = \emptyset$, and $A(\widehat{\y(-\x)}) \setminus \{ \y',-\x' \}, A(\widehat{(-\y)\x}) \setminus \{ -\y', \x' \} \subset \inter \MM$.
\end{Lemma}

\begin{proof}
The proof is based on two observations. First, note that without loss of generality, we may assume that $\x=\x'$ (see Figure~\ref{fig:base}). Second, we observe that when $\y$ moves to $\y'$ under $A$, every point of $\widehat{\x\y}$ is translated by a positive scalar multiple of the vector $\y'-\y$. This shows that $A(\widehat{\x\y})$ is disjoint from $\inter \MM$. To prove the second statement, we may apply a similar argument.
\end{proof}

\begin{Lemma}\label{lem:base}
Let $T=\conv \{ \a,\b,\c \}$ and $T'= \conv \{ \a',\b',\c \}$ be isosceles triangles (in the norm of $\M$) such that their bases $[\a,\b]$ and $[\a',\b']$ are parallel, $||\c-\b||_{\MM}=||\c-\a||_{\MM} = ||\c-\b'||_{\MM}=||\c-\a'||_{\MM} = \tau > 1+\lambda$, and $||\b-\a||_{\MM} \geq ||\b'-\a'||_{\MM} \geq 2$. Let $\rho(T) := \frac{\vol_2\left(T \cap \left( (a+(1+\lambda) \MM) \cup (b+(1+\lambda) \MM) \right)\right)}{\vol_2(T)}$, and define $\rho(T')$ similarly. Then $\rho(T) \leq \rho(T')$, with equality if and only if $[\a,\b]=[\a',\b']$.
\end{Lemma}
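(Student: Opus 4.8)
\textbf{Proof plan for Lemma~\ref{lem:base}.}

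The plan is to reduce the comparison of $\rho(T)$ and $\rho(T')$ to a pointwise, angle-by-angle comparison of the two isosceles triangles as seen from the apex $\c$. First I would set up convenient coordinates: translate so that $\c=\o$, and parametrize the two triangles radially from $\c$. Since both triangles are isosceles with apex $\c$ and with $\|\c-\a\|_\MM=\|\c-\b\|_\MM=\|\c-\a'\|_\MM=\|\c-\b'\|_\MM=\tau$, the four points $\a,\b,\a',\b'$ all lie on the scaled boundary $\tau\bd\MM=\bd(\tau\MM)$. Writing $\a=\tau\x$, $\b=\tau\y$, $\a'=\tau\x'$, $\b'=\tau\y'$ with $\x,\y,\x',\y'\in\bd\MM$, the hypothesis that $[\a',\b']$ is parallel to $[\a,\b]$ and shorter (in $\M$-length, equivalently in Euclidean length since everything is a scaled copy) forces $\widehat{\x'\y'}\subsetneq\widehat{\x\y}$ after possibly relabeling, so that the configuration is exactly the one in Remark~\ref{rem:geomobs}. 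The triangle $T$ is the union of the two radial segments sweeping the arc $\widehat{\x\y}$ from $\c$, capped by the chord $[\a,\b]$; similarly for $T'$.

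Next I would exploit the linear map $A$ of Remark~\ref{rem:geomobs} with $A(\x)=\x'$, $A(\y)=\y'$ (extended linearly), which carries $T$ to $T'$ while fixing $\c=\o$ and mapping the chord $[\a,\b]$ to the chord $[\a',\b']$. The key point is that $A$ preserves ratios of areas of corresponding sub-regions, so it suffices to compare, for each such sub-region, the density of the soft disks $\a+(1+\lambda)\MM$ and $\b+(1+\lambda)\MM$ inside it before and after applying $A$. Here the two statements of Remark~\ref{rem:geomobs} do the work: the image $A(\widehat{\x\y})$ lies outside $\inter\MM$, which (after rescaling by $\tau$ and using $\tau>1+\lambda$) says that near the base the soft disks around $\a'$ and $\b'$ stick out of $T'$ no more than the soft disks around $\a,\b$ stick out of $T$; conversely the second statement of the Remark, controlling the images of the complementary arcs, pins down on which side of the relevant bisectors the boundary of each soft disk moves. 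Concretely, I would split $T$ along the bisector of $[\a,\b]$ (with respect to $\MM$, which by the $3$-fold symmetry and our normalization is the Euclidean perpendicular bisector as well), handle the half containing $\a$, and show that the portion of $\a+(1+\lambda)\MM$ inside that half-triangle gains (relative area) when we pass to $T'$, because shrinking the base pulls the far edge of the triangle inward across the boundary of the soft disk exactly in the region described by Remark~\ref{rem:geomobs}; equality throughout forces $A$ to be the identity on the arc, i.e. $[\a,\b]=[\a',\b']$.

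The main obstacle I anticipate is making the ``pointwise density comparison'' rigorous: one must show that the region swept out is, in each angular direction from $\c$, either entirely inside a soft disk in both $T$ and $T'$, or that the crossing point of $\bd((1+\lambda)\MM+\a)$ with the ray moves monotonically in the favorable direction under the base-shortening. This requires knowing that $\bd(\a+(1+\lambda)\MM)$ meets the boundary structure of $T$ transversally and in a controlled way, which is where smoothness and strict convexity of $\MM$ (assumed at the start of the section) are used, together with $\tau>1+\lambda$ to guarantee the apex $\c$ is not covered by either soft disk. A secondary subtlety is the case analysis depending on whether $\tau\ge 1+\lambda$ is large enough that the two soft disks around $\a$ and $\b$ are disjoint within $T$ or overlap near the base; as in Lemma~\ref{lem:legs}, the overlapping case is handled by the same inequality since overlap only makes the comparison more favorable, but it should be mentioned explicitly. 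Once the half-triangle estimate is in place, adding the two symmetric halves and dividing by $\vol_2(T)=\vol_2(T')/\det A$ — wait, more precisely comparing $\vol_2(\cdot\cap T)/\vol_2(T)$ directly since $A$ scales numerator and denominator by the same factor $\det A$ — yields $\rho(T)\le\rho(T')$ with the stated equality condition.
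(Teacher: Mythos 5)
Your proposal follows essentially the same route as the paper's proof: normalize so that $\c=\o$, apply the linear map $A$ of Remark~\ref{rem:geomobs} carrying $T$ to $T'$, deduce from that Remark the containments $A(\a+(1+\lambda)\MM)\cap T'\subseteq(\a'+(1+\lambda)\MM)\cap T'$ and $A(\b+(1+\lambda)\MM)\cap T'\subseteq(\b'+(1+\lambda)\MM)\cap T'$, and use the fact that a nondegenerate linear map preserves ratios of areas. The additional machinery you anticipate needing (bisector splitting, angular monotonicity, transversality) is not required once these set containments are established, since they immediately give $\rho(T)\leq\rho(T')$ with equality forcing $\a=\a'$ and $\b=\b'$.
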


\begin{proof}
Without loss of generality, we may assume that $\c=\o$, and $\a,\a',\b',\b,-\a$ are in this counterclockwise order around $\o$ in $\tau \bd \MM$. Let $A$ be the linear transformation defined by $A(\a)=\a'$, $A(\b) = \b'$. Then $A(T)=T'$, and by Lemma~\ref{rem:geomobs}, we have $A(\a+(1+\lambda) \MM) \cap T' \subseteq (\a'+(1+\lambda)\MM) \cap T'$ and $A(\b+(1+\lambda) \MM) \cap T' \subseteq (\b'+(1+\lambda)\MM) \cap T'$. Since $A$ is nondegenerate, and a nondegenerate linear transformation does not change the ratio of areas of figures, it follows that $\rho(T) \leq \rho(T')$. Furthermore, $\rho(T) = \rho(T')$ implies $A(\a+(1+\lambda) \MM) \cap T' = (\a'+(1+\lambda)\MM) \cap T'$ and $A(\b+(1+\lambda) \MM) \cap T' = (\b'+(1+\lambda)\MM) \cap T'$, which in turn yields that $\a=\a'$ and $\b=\b'$.
\end{proof}
This completes the proof of Theorem~\ref{thm:lattice}.

\section{Proof of Theorem~\ref{thm:density}}

The core method of our proof is the following statement.

\begin{Theorem}\label{thm:largeball}
Let $\P$ be a unit ball packing in $\Eu^3$, and let $0 < \lambda < \sqrt{\frac{5}{3}}-1=0.290994...$. Then any ball of radius $R_{\lambda} := 11 \sqrt{\frac{5}{3}} +3 - \lambda$ contains a ball of radius $\sqrt{\frac{5}{3}} - 1 - \lambda$ that does not overlap $\P_{\lambda}$.
\end{Theorem}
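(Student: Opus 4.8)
The plan is to exploit B\"or\"oczky's theorem that $\gamma(\BB^3)=\sqrt{\tfrac53}$, i.e.\ that in \emph{any} packing $\P$ of unit balls there is a point whose distance from every center of $\P$ is at least $\sqrt{\tfrac53}$. Equivalently, there is a point $\p$ such that the ball $\p+(\sqrt{\tfrac53}-1)\BB^3$ does not overlap $\PP=\bigcup_i(\c_i+\BB^3)$. Shrinking this by $\lambda$, the ball $\p+(\sqrt{\tfrac53}-1-\lambda)\BB^3$ stays at distance at least $1+\lambda$ from every $\c_i$, hence it does not overlap $\PP_\lambda=\bigcup_i(\c_i+(1+\lambda)\BB^3)$. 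The remaining task is purely quantitative: to make this local statement uniform, i.e.\ to show that such an empty ball can be found inside \emph{every} ball of radius $R_\lambda=11\sqrt{\tfrac53}+3-\lambda$, not merely somewhere in space.

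For the quantitative step I would argue as follows. Fix an arbitrary ball $B=\x_0+R_\lambda\BB^3$. Consider the packing $\P'$ obtained from $\P$ by keeping only those unit balls $\c_i+\BB^3$ that meet the slightly larger ball $\x_0+(R_\lambda+1)\BB^3$, and — if necessary — saturating $\P'$ far away from $B$ so that it remains an infinite unit ball packing of $\Eu^3$; inside $\x_0+R_\lambda\BB^3$ the packings $\P$ and $\P'$ have exactly the same centers, so any ball empty for $\P'_\lambda$ that lies in $B$ is also empty for $\PP_\lambda$. Applying B\"or\"oczky's theorem to $\P'$ yields a point $\p$ with $\dist(\p,\c_i)\ge\sqrt{\tfrac53}$ for every center $\c_i$ of $\P'$. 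The issue is that $\p$ need not lie in $B$. To fix this, observe that the ball $\p+(\sqrt{\tfrac53}-1)\BB^3$ is free of centers of $\P'$, so if it met $B$ we would be essentially done; the point is to show it \emph{does} meet $B$, or can be replaced by one that does. Here is where the constant $11\sqrt{\tfrac53}+3$ enters: I expect the argument to run by a pigeonhole/volume comparison. Tile $B$ by a bounded number of congruent subballs of radius $\sqrt{\tfrac53}$ (a fixed-ratio covering of a ball of radius $R_\lambda$ by balls of radius $\sqrt{\tfrac53}$ needs a number of pieces depending only on $R_\lambda/\sqrt{\tfrac53}$), note that a unit-ball packing can place at most a bounded number of centers in each piece, and choose $R_\lambda$ large enough that the number of subballs exceeds the maximum possible number of centers of $\P'$ inside $B$; then some subball contains no center, and its own concentric sub-subball of radius $\sqrt{\tfrac53}-1-\lambda$ is at distance $\ge 1+\lambda$ from all centers, hence disjoint from $\PP_\lambda$, and it lies in $B$ by construction.

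Concretely, I would set up the counting so that the empty-ball radius we need is $\sqrt{\tfrac53}-1-\lambda$, pad it to radius $\sqrt{\tfrac53}-1$ to fit B\"or\"oczky's statement about unit packings (using $\lambda>0$ exactly to keep a positive margin $\sqrt{\tfrac53}-1-\lambda$ available), and then add to $R_\lambda$ the extra room needed so that the concentric ball of radius $\sqrt{\tfrac53}-1$ fits strictly inside $B$ together with the one-unit collar used in the saturation reduction. The explicit value $R_\lambda = 11\sqrt{\tfrac53}+3-\lambda$ should come out of: $\sqrt{\tfrac53}$ (radius of the B\"or\"oczky empty ball, as an additive buffer), plus $1$ (unit-ball radius / collar), plus $1$ (another unit collar for the outer parallel body), plus a bounded number of further $\sqrt{\tfrac53}$-steps forced by whatever packing-inside-a-ball counting constant one uses; the coefficient $11$ is simply the number of such steps that the chosen elementary counting argument requires, and $3-\lambda$ collects the $O(1)$ additive corrections together with the soft-radius shift.

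\textbf{Main obstacle.} The conceptual content — that B\"or\"oczky's simultaneous packing–covering bound produces, near any prescribed location, an empty ball of radius $\sqrt{\tfrac53}-1-\lambda$ — is immediate; the real work is the \emph{localization}: upgrading ``there exists an empty ball somewhere'' to ``there exists one inside every ball of radius $R_\lambda$,'' with an honest, explicit $R_\lambda$. The delicate points are (i) the saturation/truncation step, which must be done so that it does not disturb the centers inside $B$ and so that the empty ball we extract genuinely avoids \emph{all} of $\PP_\lambda$, not just the truncated part; and (ii) pinning down the counting constant sharply enough to land on $11$ rather than a larger number — this is where one either invokes a known bound on the number of nonoverlapping unit balls meeting a ball of given radius, or runs a self-contained volume argument, and in either case has to track the additive $\sqrt{\tfrac53}$- and unit-collars carefully. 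I do not expect any genuinely hard inequality here, only a careful bookkeeping of radii.
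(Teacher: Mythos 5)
Your reduction of the problem to finding a point $\q$ inside the given ball with $\dist(\q,\c_i)\ge\sqrt{5/3}$ for all centers $\c_i$ is the right normalization, and you correctly identify the localization of B\"or\"oczky's theorem as the whole difficulty. But the mechanism you propose for that localization --- a pigeonhole/volume comparison --- cannot work, for a structural reason. The point you need must be at distance at least \emph{exactly} $\sqrt{5/3}$ from every center (the empty radius $\sqrt{5/3}-1-\lambda$ plus the soft radius $1+\lambda$ sum to $\sqrt{5/3}$ with no slack), and B\"or\"oczky's theorem is sharp: in the extremal BCC packing the set of such points is the discrete set of deep holes, a set of measure zero. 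No counting of cells versus centers can detect a measure-zero target. Concretely, your pigeonhole conflates two different things: a partition cell that is \emph{contained in} a ball of radius $\sqrt{5/3}$ and happens to contain no center only tells you that every center is at distance greater than roughly half the cell's diameter from the cell's midpoint, not at distance $\ge\sqrt{5/3}$; to get the latter you would need an empty \emph{ball} of radius $\sqrt{5/3}$, and balls do not partition space. Moreover the count itself fails: a unit ball packing can have on the order of $0.74\,R^3$ centers in a ball of radius $R$, which is not smaller than the number of partition cells of inradius comparable to $\sqrt{5/3}$, and shrinking the cells to win the count destroys the conclusion. Your truncation/saturation step also does not help, since B\"or\"oczky's theorem applied to the modified packing still only produces a deep point somewhere, with no control on its location.

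What is actually needed --- and what the paper does --- is to rerun B\"or\"oczky's \emph{proof} locally rather than quote his theorem as a black box. One takes the Dirichlet--Voronoi decomposition of a saturated packing, lets $V$ be the cell containing the center of the given ball, and defines $k$-neighbors of $V$. Assuming for contradiction that no $k$-neighbor with $k\le 5$ has a vertex at distance $\ge\sqrt{5/3}$ from the centers, every such cell has diameter at most $2\sqrt{5/3}$, and B\"or\"oczky's combinatorial analysis (each vertex on at most four cells, faces at most hexagonal, Euler's formula forcing either few faces, a hexagonal face, or twelve pentagonal faces leading to a $120$-cell complex) yields a contradiction within this bounded neighborhood. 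The radius $R_\lambda=11\sqrt{5/3}+3-\lambda$ is exactly the bookkeeping of five cell-diameters of $2\sqrt{5/3}$ plus the remaining additive collars, not the output of a packing-counting constant. So the missing idea in your proposal is that the localization must come from the combinatorics of the Voronoi cells, not from volume.
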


\begin{proof}
Consider a packing $\P$, which, without loss of generality, we assume to be saturated, and let $B:=\c+R_{\lambda}\BB^3$  be the ball of radius $R_{\lambda}$ and center $\c$.
We show that the ball $\bar{B}:=\c+\left(10 \sqrt{\frac{5}{3}}+4\right)\BB^3$ contains a vertex of a Diriclet-Voronoi cell (in short, DV-cell) of $\P$ whose distance from the centers of the balls of $\P$ is at least $\sqrt{\frac{5}{3}}$.

Let $V$ be a DV-cell that contains $\c$, and call a DV-cell $V'$ a $k$-neighbor of $V$ if there is a continuous curve, starting at an interior point of $V$ and ending at an interior point of $V'$ that contains the points of at most $k-1$ other DV-cells of $\P$. In particular, we say that $V$ is a $0$-neighbor of itself.
Notice that the distance of any vertex of a DV-cell from its center is at most $2$, and if it is at most $\sqrt{\frac{5}{3}}$, then the diameter of the cell is at most $2\sqrt{\frac{5}{3}}$. It follows that if $V$ has a $k$-neighbor with $k\leq 5$, which has a vertex at least $\sqrt{\frac{5}{3}}$ far from the centers of the balls of $\P$, then $\bar{B}$ contains a vertex which is at least $\sqrt{\frac{5}{3}}$ far from the unit ball centers.
So, assume that no $k$-neighbor of $V$, with $k \leq 5$, has a vertex at a distance at least $\sqrt{\frac{5}{3}}$ from the centers of the balls in $\P$. We are going to derive a contradiction from it, which then completes the proof of Theorem~\ref{thm:largeball}. In what follows $k \leq 5$.

Consider a vertex $\p$ of a $k$-neighbor of $V$. Then, by the same argument as in \cite{Bo} (see the second paragraph on page 87), $\p$ belongs to at most four DV-cells of $\P$. The third paragraph on page 87 of  \cite{Bo} implies that the faces of a $k$-neighbor of $V$ are at most hexagons.

We show that there is a $k$-neighbor of $V$ which has a hexagon face.
Similarly like in \cite{Bo} (see the fourth paragraph on page 87), by Euler's formula if a DV-cell has more than 12 faces, then it has a face with more than $5$ vertices, which must be, by the considerations above, a hexagon.
Using the argument in \cite{Bo} (see the fifth paragraph on page 87), if a $k$-neighbor $\bar{V}$ of $V$ has less than $11$ faces, then it has a vertex at a distance greater than $\sqrt{\frac{5}{3}}$ from the center of $\bar{V}$, which contradicts our assumption.
Using the sixth paragraph on page 87 in \cite{Bo}, the possibility that $\bar{V}$ has $11$ faces is exluded by Lemma 2 of \cite{Bo}.
Based on the last paragraph of page 87 in  \cite{Bo}, consider the case that each $k$-neighbor of $V$ has exactly $12$ faces, and each face is a pentagon.
Using the idea of Lemma 3 of \cite{Bo}, starting with $V$ we can construct a cell-complex $\mathcal{P}$ isomorphic to that of the regular 120-cell in $\Eu^4$.
Note that then each cell in $\mathcal{P}$ is a $k$-neighbor of $V$, with $k \leq 5$. Thus, at each vertex in this complex, exactly four cells of $\P$ meet, which, similarly like in the proof of Lemma 3 of \cite{Bo}, yields a contradiction.

Thus, we can conclude that a $k$-neighbor of $V$ has  a hexagon face. Then similarly like in \cite{Bo} (see page 88), a vertex of this hexagon has distance at least $\sqrt{\frac{5}{3}}$ from the center of the cell, which contradicts our assumption. This completes the proof of Theorem~\ref{thm:largeball}.
\end{proof}

Now, let $\P$ be a unit ball packing in $\Eu^3$, and let $0 < \lambda < \sqrt{\frac{5}{3}}-1$. We are going to show that $\bar{\delta}_3 (\P_\lambda) \leq 1 - \frac{\left( \sqrt{\frac{5}{3}} - 1 - \lambda\right)^3}{R_{\lambda}^3}$. The details are as follows.

Let $\delta_{\lambda} := 1 - \frac{\left( \sqrt{\frac{5}{3}} - 1 - \lambda\right)^3}{R_{\lambda}^3}$.
By Theorem~\ref{thm:largeball}, it follows that any ball of radius $R_{\lambda}$ contains a ball of radius $\sqrt{\frac{5}{3}} - 1 - \lambda$ that does not overlap $\P_{\lambda}$. In other words, we have that
\[
\frac{\int_{\z + R_{\lambda} \BB^3} \chi_{\P_{\lambda}}(\y) \, d \y}{\frac{4}{3}R_{\lambda}^3 \pi} \leq \delta_{\lambda}
\]
for any $\z \in \Eu^3$, where $\chi_{\P_{\lambda}}$ is the indicator function of $\P_{\lambda}$.

Let $R>0$ be sufficiently large. Then the density of $\P_{\lambda}$ in $R \BB^3$ is
\[
\frac{\int_{R \BB^3} \chi_{\P_{\lambda}} (\y) \, d \y}{\frac{4}{3} R^3 \pi} =
\frac{\int_{R \BB^3} \int_{\y + R_{\lambda} \BB^3} \chi_{\P_{\lambda}}(\y) \, d \z \, d \y}{\frac{16}{9} R^3 R_{\lambda}^3 \pi^2} =
\frac{\int_{(R+R_{\lambda}) \BB^3}  \int_{(\z + R_{\lambda} \BB^3) \cap R \BB^3} \chi_{\P_{\lambda}}(\y) \, d \y \, d \z}{\frac{16}{9} R^3 R_{\lambda}^3 \pi^2} \le\]
\[
\leq \frac{\int_{(R+R_{\lambda}) \BB^3} \delta_{\lambda} \, d \z}{\frac{4}{3} R^3 \pi} =
\delta_{\lambda} \cdot \left( 1 + \frac{R_{\lambda}}{R} \right)^3.
\]
This completes the proof of Theorem~\ref{thm:density}.

\section{Proof of Theorem~\ref{thm:localmaximum}}\label{local-max}

In the proof we use the following formula of Csik\'os, stated as Theorem 4.1 in \cite{Csikos}.

\begin{Theorem}\label{thm:csikos}
Let $\{ \mathbf{B}_i^d=\mathbf{x}_i + r_i \mathbf{B}^d : i=1,2,\ldots, n\}$ be a family of balls in $\Eu^d$. For any value $i$ and point $\mathbf{p} \in \Eu^d$, set $K_i(\mathbf{p}) = \| \mathbf{x}_i - \mathbf{p}  \|^2 - r_i^2$. For any $i$, let $C_{i} = \{ \mathbf{p} \in \Eu^d : K_i(\mathbf{p}) \leq K_j(\mathbf{p}), j=1,2,\ldots,n \}$ be the \emph{weighted Dirichlet-Voronoi} cell of $\mathbf{B}_i^d$. If $i \neq j$, the set $W_{ij} = C_i \cap C_j \cap \mathbf{B}_i^d \cap \mathbf{B}_j^d$ is called the \emph{wall} between $\mathbf{B}_i^d$ and $\mathbf{B}_j^d$. Let $\gamma_i : (a,b) \to \Eu^d$, $i=1,2,\ldots, n$ be smooth curves, and $t_0 \in (a,b)$ be a value such that the points $\gamma_i(t_0) = \mathbf{x}_i$ are pairwise distinct. Then the function
$v(t) :=\vol_d \left( \bigcup_{i=1}^n \left( \gamma_i(t) + r_i \mathbf{B}^d \right) \right)$ is differentiable at $t_0$, and its derivative is
\[
v'(t_0) = \sum_{1 \leq i < j \leq n} \|\gamma_i(t_0) - \gamma_j(t_0) \|' \cdot \vol_{d-1}\left( W_{ij} \right).
\]
\end{Theorem}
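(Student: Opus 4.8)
The plan is to decompose the volume of the union through the power (Laguerre) diagram determined by the weighted cells $C_i$, and then to differentiate by combining the transport theorem with the divergence theorem, the latter converting the motion of the exposed spheres into integrals over the walls. First I would record the decomposition identity. A point $\mathbf{p}$ lies in $U(t):=\bigcup_i \left(\gamma_i(t)+r_i\BB^d\right)$ exactly when $\min_i K_i(\mathbf{p})\le 0$, and if $\mathbf{p}\in C_i$ then this minimum equals $K_i(\mathbf{p})$; hence $U\cap C_i=\BB_i^d\cap C_i=:\Omega_i$, and since the cells $C_i$ tile $\Eu^d$ with pairwise disjoint interiors,
\[
v(t)=\sum_{i=1}^n \vol_d\big(\Omega_i(t)\big).
\]
The boundary of each $\Omega_i$ splits into a spherical part $S_i:=C_i\cap \bd \BB_i^d$ and flat parts lying on the radical hyperplanes $\{K_i=K_j\}$. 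On such a hyperplane $K_i=K_j$, so $\mathbf{p}\in\BB_i^d$ forces $K_j(\mathbf{p})=K_i(\mathbf{p})\le 0$, i.e. $\mathbf{p}\in\BB_j^d$; thus the facet of $\Omega_i$ shared with cell $j$ is precisely $C_i\cap C_j\cap \BB_i^d\cap \BB_j^d=W_{ij}$. Consequently $\bd\Omega_i=S_i\cup\bigcup_{j\neq i}W_{ij}$, the caps $S_i$ together forming $\bd U$ while each wall $W_{ij}$ is shared by exactly two regions.

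Next I would apply the transport theorem to $v(t)=\vol_d(U(t))$. Away from the lower-dimensional creases where two exposed spheres meet, a point of $\bd U(t_0)$ lies on a unique sphere $\bd\BB_i^d$, which moves by the rigid translation $\mathbf{p}\mapsto\mathbf{p}+(\gamma_i(t)-\gamma_i(t_0))$; hence the outward normal velocity there is $\big\langle\gamma_i'(t_0),\mathbf{n}\big\rangle$, with $\mathbf{n}=(\mathbf{p}-\mathbf{x}_i)/r_i$. The transport theorem then gives
\[
v'(t_0)=\sum_{i=1}^n \int_{S_i}\big\langle\gamma_i'(t_0),\mathbf{n}\big\rangle\,dA.
\]
The crucial step is to rewrite each spherical integral over the walls. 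Since $\gamma_i'(t_0)$ is a constant vector, the field $\mathbf{p}\mapsto\gamma_i'(t_0)$ is divergence-free, so the divergence theorem on $\Omega_i$ yields $\int_{\bd\Omega_i}\langle\gamma_i'(t_0),\mathbf{n}\rangle\,dA=0$, whence
\[
\int_{S_i}\big\langle\gamma_i'(t_0),\mathbf{n}\big\rangle\,dA=-\sum_{j\neq i}\int_{W_{ij}}\big\langle\gamma_i'(t_0),\mathbf{n}_{ij}\big\rangle\,dA,
\]
where $\mathbf{n}_{ij}$ is the outward unit normal of $\Omega_i$ along $W_{ij}$. Because $C_i$ is the side $\{K_i\le K_j\}$ and $\nabla(K_i-K_j)=2(\mathbf{x}_j-\mathbf{x}_i)$, this normal is $\mathbf{n}_{ij}=(\mathbf{x}_j-\mathbf{x}_i)/\|\mathbf{x}_j-\mathbf{x}_i\|$, so $\mathbf{n}_{ji}=-\mathbf{n}_{ij}$. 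Summing over $i$ and grouping the two contributions of each wall $W_{ij}$ with $i<j$ leaves $-\int_{W_{ij}}\langle\gamma_i'(t_0)-\gamma_j'(t_0),\mathbf{n}_{ij}\rangle\,dA$, and a direct computation identifies the integrand:
\[
-\big\langle\gamma_i'(t_0)-\gamma_j'(t_0),\mathbf{n}_{ij}\big\rangle=\frac{\langle\mathbf{x}_i-\mathbf{x}_j,\gamma_i'(t_0)-\gamma_j'(t_0)\rangle}{\|\mathbf{x}_i-\mathbf{x}_j\|}=\|\gamma_i(t_0)-\gamma_j(t_0)\|'.
\]
As $W_{ij}$ is flat, the integral equals $\|\gamma_i(t_0)-\gamma_j(t_0)\|'\cdot\vol_{d-1}(W_{ij})$, and summing over all pairs produces the asserted formula.

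The main obstacle is purely one of regularity, since the combinatorics of the power diagram — which walls $W_{ij}$ are nonempty and which caps $S_i$ are exposed — may change as $t$ varies, so $\bd U(t)$ is only piecewise smooth with non-constant combinatorial type, and the naive transport theorem need not apply at an arbitrary $t_0$. I would circumvent this as follows. The function $v$ is locally Lipschitz, since each moving sphere sweeps a shell of volume $O(|t-t_0|)$, hence $v$ is differentiable at almost every $t$; at the full-measure set of times where the diagram is combinatorially stable, the flux-and-divergence computation above is valid and gives $v'(t)=\sum_{i<j}\|\gamma_i(t)-\gamma_j(t)\|'\cdot\vol_{d-1}(W_{ij}(t))$. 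Since the centers stay distinct near $t_0$, the wall volumes $\vol_{d-1}(W_{ij}(t))$ and the distance-derivatives $\|\gamma_i(t)-\gamma_j(t)\|'$ depend continuously on $t$, a wall degenerating at a combinatorial transition merely having area tending to $0$. A locally Lipschitz function whose almost-everywhere derivative admits a continuous extension is differentiable everywhere with that derivative; applying this at $t_0$ yields both the differentiability of $v$ and the stated value of $v'(t_0)$.
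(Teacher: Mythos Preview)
The paper does not prove this theorem at all: it is quoted verbatim as ``the following formula of Csik\'os, stated as Theorem~4.1 in \cite{Csikos}'' and then used as a black box in the proof of Theorem~\ref{thm:localmaximum}. There is therefore no proof in the paper for your attempt to be compared against.

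That said, your argument is essentially the one Csik\'os gives in the cited reference: decompose the union through the power (Laguerre) cells, write the boundary of each piece $\Omega_i$ as a spherical cap $S_i$ together with the walls $W_{ij}$, compute $v'$ as the flux of the translation velocities through the exposed caps, and then use the divergence theorem on each $\Omega_i$ with the constant field $\gamma_i'(t_0)$ to transfer those cap integrals onto the walls, where the opposite orientations on $W_{ij}$ and $W_{ji}$ produce the relative-distance derivative. Your handling of the regularity issue---Lipschitz continuity of $v$, validity of the formula at generic $t$, and continuity of the right-hand side forcing differentiability everywhere---is a clean way to sidestep the combinatorial transitions of the diagram; Csik\'os treats this point somewhat differently, but the substance is the same.
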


We note that Theorem~\ref{thm:csikos} remains valid for one-sided derivatives at $a$ and $b$ if the curves $\gamma_i$ are defined in the interval $[a,b]$.

Let $\mathcal{F}$ be an FCC lattice packing of soft balls containing the soft ball $(1+\lambda)\mathbf{B}^3$ such that $\BB^3$ is touched by the unit balls $\{ \BB_i^3 : i=1,2,\ldots,12\}$ concentric to $12$ soft balls of $\mathcal{F}$. Moreover, let $V$ be the Dirichlet-Voronoi cell of $\mathbf{B}^3$ in $\mathcal{F}$. 

Without loss of generality, we may assume that $\lambda$ is chosen such that the soft balls of $\mathcal{F}$ do not cover $\Eu^3$.

Consider an arbitrary lattice packing $\mathcal{F}'$ of soft balls of radii $1+\lambda$ that is `close' to $\mathcal{F}$ sharing $(1+\lambda)\BB^3$ with $\mathcal{F}$. Then there is a smooth deformation $\mathcal{F}(t)$, $t \in [0,1]$ such that $\mathcal{F}(0)=\mathcal{F}$, $\mathcal{F}(1)=\mathcal{F}'$, and for every $t$
the length of the minimal lattice vector in $\mathcal{F}(t)$ is at least $2$.
Then the centers of the neighbors of $\BB^3$ move on smooth curves; that is, there are smooth curves $\gamma_i (t): [0,1] \to \Eu^3$, $i=1,2,\ldots, 12$ such that $\gamma_i(0)$ is the center of $\BB_i^3$. Recall that any lattice packing of unit balls with each ball touched by $12$ others is an FCC lattice. (See for example, \cite{BoSz}.) Thus, without loss of generality one may assume that $\mathcal{F}(t)$ is not congruent to $\mathcal{F}$ for any $0<t \leq 1$ and therefore for some value of $i$, $d'_i:=\| \gamma_i(t) \|'_{t=0} > 0$.

Let the Dirichlet-Voronoi cell of $\mathbf{B}^3$ in $\mathcal{F}(t)$ be denoted by $V(t)$.
The soft density of $\mathcal{F}(t)$ is equal to $\rho(\mathcal{F}(t)) = \frac{\vol_3 \left( V(t) \cap (1+\lambda) \mathbf{B}^3 \right)}{\vol_3 \left( V(t) \right)}$.
Let $F_i$ and $W_i$ denote, respectively, the face of $V$ touched by $\mathbf{B}_i^3$, and $((1+\lambda) \mathbf{B}^3) \cap F_i$.
Note that since the symmetry group of the FCC lattice acts transitively on the faces of $V$, there are quantities $F$, $W$ such that $F=\area(F_i)$ and $W=\area(W_i)$ independently of the values of $i$.

As $ \|\gamma_i(0) \| = 2$ for every $i$, we have $\vol_{3}(V)= \frac{1}{3} \sum_{i=1}^{12} \frac{1}{2} \|\gamma_i(0) \| \area(F_i)) = 4F$, and
$\vol_3 \left( V \cap (1+\lambda) \mathbf{B}^3 \right) > 4W$.
Applying Theorem~\ref{thm:csikos}, we obtain that
\[
\left( \vol_3 \left( V(t) \cap (1+\lambda) \mathbf{B}^3 \right) \right)'_{t=0} = \frac{1}{2} \sum_{i=1}^{12} d'_i W.
\]
Similarly, we have
\[
\left( \vol_3 \left( V(t) \right) \right)'_{t=0} = \frac{1}{2} \sum_{i=1}^{12} d'_i  F.
\]
Thus, as $d'_i \geq 0$ for all values of $i$, and $d'_i > 0$ for some values of $i$, it follows that
\[
\rho(\mathcal{F}(t))'_{t=0} < \frac{2F \sum_{i=1}^{12} d'_i W - 2W \sum_{i=1}^{12} d'_i F}{16 F^2} = 0.
\]
This completes the proof of Theorem~\ref{thm:localmaximum}.

\bigskip

 {\large {\bf Acknowledgements}}

The authors would like to thank the anonymous referee for careful reading and valuable comments.

\bigskip

\noindent K\'aroly Bezdek \\
\small{Department of Mathematics and Statistics, University of Calgary, Canada}\\
\small{Department of Mathematics, University of Pannonia, Veszpr\'em, Hungary\\
\small{E-mail: \texttt{kbezdek@ucalgary.ca}}

\bigskip

\noindent and

\bigskip

\noindent Zsolt L\'angi \\
\small{MTA-BME Morphodynamics Research Group}\\ 
\small{Department of Algebra and Geometry}\\ 
\small{Budapest University of Technology and Economics, Budapest, Hungary}\\
\small{\texttt{zlangi@math.bme.hu}}

\end{document}